\pdfoutput=1
\RequirePackage{ifpdf}
\ifpdf 
\documentclass[pdftex]{sigma}
\else
\documentclass{sigma}
\fi

\numberwithin{equation}{section}

\newtheorem{Theorem}{Theorem}[section]
\newtheorem{Corollary}[Theorem]{Corollary}
\newtheorem{Lemma}[Theorem]{Lemma}
 { \theoremstyle{definition}
\newtheorem{Definition}[Theorem]{Definition}
\newtheorem{Remark}[Theorem]{Remark} }

\def\ta{\theta}

\newcommand{\ti}{\mathrm i}
\newcommand\bn{\mathbf n}
\newcommand\bk{\mathbf k}
\newcommand\bl{\mathbf l}
\newcommand\bx{\mathbf x}

\begin{document}

\allowdisplaybreaks

\newcommand{\arXivNumber}{2005.02203}

\renewcommand{\thefootnote}{}

\renewcommand{\PaperNumber}{088}

\FirstPageHeading

\ShortArticleName{Multidimensional Matrix Inversions and Elliptic Hypergeometric Series on Root Systems}

\ArticleName{Multidimensional Matrix Inversions\\ and Elliptic Hypergeometric Series on Root Systems\footnote{This paper is a~contribution to the Special Issue on Elliptic Integrable Systems, Special Functions and Quantum Field Theory. The full collection is available at \href{https://www.emis.de/journals/SIGMA/elliptic-integrable-systems.html}{https://www.emis.de/journals/SIGMA/elliptic-integrable-systems.html}}}

\Author{Hjalmar ROSENGREN~$^\dag$ and Michael J.~SCHLOSSER~$^\ddag$}

\AuthorNameForHeading{H.~Rosengren and M.J.~Schlosser}

\Address{$^\dag$~Department of Mathematics, Chalmers University of Technology\\
\hphantom{$^\dag$}~and the University of Gothenburg, SE-412 96 G\"oteborg, Sweden}
\EmailD{\href{mailto:hjalmar@chalmers.se}{hjalmar@chalmers.se}}
\URLaddressD{\url{http://www.math.chalmers.se/~hjalmar/}}

\Address{$^\ddag$~Fakult\"at f\"ur Mathematik der Universit\"at Wien,\\
\hphantom{$^\ddag$}~Oskar Morgenstern-Platz 1, A-1090 Wien, Austria}
\EmailD{\href{mailto:michael.schlosser@univie.ac.at}{michael.schlosser@univie.ac.at}}
\URLaddressD{\url{http://www.mat.univie.ac.at/~schlosse/}}

\ArticleDates{Received May 06, 2020, in final form August 28, 2020; Published online September 24, 2020}

\Abstract{Multidimensional matrix inversions provide a powerful tool for studying multiple hypergeometric series. In order to extend this technique to elliptic hypergeometric series, we present three new multidimensional matrix inversions. As applications, we obtain a new~$A_r$ elliptic Jackson summation, as well as several quadratic, cubic and quartic summation formulas.}

\Keywords{elliptic hypergeometric series; hypergeometric series associated with root systems; multidimensional matrix inversion}

\Classification{33D67}

\renewcommand{\thefootnote}{\arabic{footnote}}
\setcounter{footnote}{0}

\section{Introduction}

Explicit matrix inversions provide a powerful tool in the study of special functions.
In particular, they have been used to derive quadratic, cubic and quartic summations for one-variable basic hypergeometric series, see, e.g., \cite{g,gr,gs,rh,rh2}. Moreover,
Andrews \cite{a} found that the Bailey transform, which is useful for deriving Rogers--Ramanujan-type identities,
 is closely related to~a~certain matrix inversion.
A unified generalization of several inversions used in these contexts was obtained by
Krattenthaler \cite{k}. He proved that the lower-triangular matrices $(f_{nk})_{n,k\in\mathbb Z}$ and~$(g_{kl})_{k,l\in\mathbb Z}$ with entries
\begin{gather}\label{ki}
f_{nk}=\frac{\prod\limits_{j=k}^{n-1}(a_j-c_k)}{\prod\limits_{j=k+1}^n(c_j-c_k)},\qquad
g_{kl}=\frac{(a_l-c_l)\prod\limits_{j=l+1}^k(a_j-c_k)}{(a_k-c_k)\prod\limits_{j=l}^{k-1}(c_j-c_k)}
\end{gather}
are mutually inverse.
The goal of the present work is to obtain generalizations of \eqref{ki} that are both multi-dimensional and elliptic, so that we can apply them to multiple elliptic hypergeometric series.
\pagebreak

Although elliptic hypergeometric series first appeared in the 1980s \cite{dj}, they did not take off as a mathematical research area until after the seminal paper of Frenkel and Turaev in 1997 \cite{ft}. In one of the first papers on this subject, Warnaar gave an elliptic extension of Krattenthaler's matrix inverse \cite[Lemma~3.2]{w}
and applied it to obtain elliptic analogues of many identities from \cite{g,gr,gs,rh2}.

\looseness=1
Matrix inversion techniques have also been applied to multiple hypergeometric series.
One then needs multi-dimensional matrices, which in the cases of relevance to us have rows and columns labelled by $\mathbb Z^r$. In the 1990's, Milne and coworkers found several multi-dimensional Bailey transforms and matrix inversions, which were applied to basic hypergeometric series associated with root systems \cite{bm,lm,m,ml,ml2}.
The second author \cite[Theorem~3.1]{s} gave a~unified extension of these inversions, which reduces to \eqref{ki} in the one-variable case. This allowed him to obtain quadratic and cubic summation formulas for multiple hypergeometric series; see
\cite{ks,ls2,ls3,s2,s3,s07,s4,s09} for subsequent work in this direction and \cite{sab} for a~general overview of classical and basic hypergeometric series associated with root systems.

There has been a substantial amount of work on elliptic hypergeometric series associated with root systems, see \cite{rw} and references given there. In particular, Bhatnagar and the second author \cite{bs} obtained several elliptic Bailey transforms associated with root systems, and also gave the corresponding multidimensional matrix inversions explicitly. However, until now these have not been extended to the level of generality of~\eqref{ki}.
In the present paper we address this question.

In Section~\ref{mis} we give two multidimensional extensions of
Warnaar's matrix inversion, as well as one somewhat less general inversion. In Section~\ref{hss} we give applications to summation formulas for elliptic
hypergeometric series on root systems. More precisely, in Section~\ref{jss}
we obtain a new extension of Jackson's summation and two quadratic summations, in all three cases for series associated with the root system $A_r$.
In Section~\ref{dss} we obtain a quadratic, a cubic and a quartic summation for
series traditionally associated with the root system $D_r$. For all these summations except the last one,
we also give companion identities, where the summation is over a simplex rather than a hyper-rectangle.
One of the quadratic $A_r$ summations (Theorem~\ref{aqist}) and the quartic $D_r$ summation (Theorem~\ref{cqust}) are new even in the limit case of basic hypergeometric series.

The multiple hypergeometric series appearing in this paper are of what different authors have called ``type I'', ``Dixon-type'' or ``Gustafson--Milne-type''. For so called ``type II'' or ``Selberg-type'' series, a matrix inversion was found independently by Coskun and Gustafson \cite[equation~(4.16)]{cg} and Rains \cite[Corollary~4.3]{rai0}, see also \cite{c} for the corresponding Bailey transform.
 It~is not clear whether that inversion can be extended to the generality of \eqref{ki}.
A~quadratic summation for Selberg-type series is given in \cite[Corollary~4.3]{rel} and further results are forth\-coming~\cite{lrw}.
There are also related integral identities \cite{vdb, ra,rel,rai2}.
To our knowledge, the present paper contains the first known quadratic summations for multiple elliptic hypergeometric series of type I, and the first known cubic and quartic summations for either type.

There is a close correspondence between elliptic hypergeometric functions given by finite sums and by integrals \cite{rw}.
In particular, Spiridonov and Warnaar~\cite{sw} gave several multiple integral inversions, which can be viewed as continuous analogues of matrix inversions from~\cite{bs}. These integral inversions and their associated Bailey transforms have found a role in quantum field theory, see, e.g., \cite{bs1,bs2,bln,gy,gk,nr,y}. This may serve as a motivation to look for integral analogues of some of our results.

Finally, we mention that it seems
 possible to generalize many of our results to transformation formulas. That will be the subject of future work.\newpage

\section{Preliminaries}

\subsection{Theta functions}

We recall some classical results for theta functions.
We will write
\begin{gather*}
\ta(x;p)=\prod\limits_{j=0}^\infty\big(1-p^jx\big)\big(1-p^{j+1}/x\big),
\end{gather*}
where $p$ is a complex number with $|p|<1$.
We refer to the case
 $p=0$, $\ta(x;0)=1-x$, as the \emph{trigonometric case}.
We will also use the shorthand notation
\begin{gather*}
\theta(x_1,\dots,x_n;p)=\ta(x_1;p)\dotsm\ta(x_n;p),
\end{gather*}
\begin{gather*}
\theta\big(x_1y^{\pm},\dots,x_ny^{\pm};p\big)=\ta(x_1y;p)\ta(x_1/y;p)\dotsm\ta(x_ny;p)
\ta(x_n/y;p).
\end{gather*}
In Section~\ref{gmis}, we will suppress the dependence on $p$ and simply write
$\theta(x)=\theta(x;p)$.

Among the properties of $\theta$ we mention the inversion formula
\begin{gather}
\label{ti}\theta(1/x;p)=-\frac1 x \theta(x;p),
\end{gather}
the quasi-periodicity
\begin{gather}
\label{qpt}\ta(px;p)=-\frac1 x \theta(x;p)
\end{gather}
and Weierstrass' addition formula
\begin{gather}\label{tadd}
\ta(xy,x/y,uv,u/v;p)-\ta(xv,x/v,uy,u/y;p)=\frac uy \ta(yv,y/v,xu,x/u;p).
\end{gather}
We will also need the identity
\begin{gather}\label{cpf}
\sum_{l=1}^k\frac{a_l\prod\limits_{j=1}^{k-2}\theta\big(a_lb_j^\pm;p\big)}
{\prod\limits_{j=1,\,j\neq l}^k\theta\big(a_la_j^\pm;p\big)}=0, \qquad
k\geq 2.
\end{gather}
To our knowledge, it was first obtained by Gustafson
\cite[Lemma~4.14]{gu}, see~\cite{r} for further references and comments.

We will use the following terminology.

\begin{Definition}\label{htfd}
For $k\in\mathbb Z_{\geq 0}$, $0<|p|<1$ and $t\in\mathbb C\setminus{0}$,
a theta function of degree $k$, nome $p$ and norm $t$ is a
holomorphic function on $\mathbb C\setminus\{0\}$ such that
\begin{gather*}
f(pz)=\frac{(-1)^kt}{z^k} f(z),\qquad
z\in\mathbb C\setminus{0}.
\end{gather*}
\end{Definition}

Note that $t$ is not a norm in the sense of normed vector spaces.
It is a classical fact (see, e.g., \cite[Corollary~1.3.5]{rln}) that any function satisfying these conditions can be written as
\begin{gather*}
f(z)=C \theta(z/a_1,\dots,z/a_k;p), \qquad a_1\dotsm a_k=t.
\end{gather*}
In particular, the zero set of $f$ is the union of the geometric progressions $a_jp^{\mathbb Z}$. One consequence is the following classical result.

\begin{Lemma}\label{tvl}
 Let $f$ be a theta function of degree $k$, nome $p$ and norm $t$ and assume that
 \begin{gather*}
 f(b_1)=\dots=f(b_k)=0,
 \end{gather*}
where $b_j$ are non-zero complex numbers such that
\begin{gather*}
b_j/b_k\notin p^{\mathbb Z}, \qquad
j\neq k,\qquad
b_1\dotsm b_k\notin tp^{\mathbb Z}.
\end{gather*}
 Then, $f$ is identically zero.
\end{Lemma}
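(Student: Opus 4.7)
The plan is to argue by contradiction, exploiting the classical parameterisation stated just above the lemma, namely $f(z)=C\,\theta(z/a_1,\dots,z/a_k;p)$ with $a_1\cdots a_k=t$. If $f$ were not identically zero, then $C\neq 0$, and the zero set of $f$ would be precisely the union $\bigcup_{l=1}^k a_l p^{\mathbb Z}$, in view of the quasi-periodicity \eqref{qpt} applied to each factor $\theta(z/a_l;p)$.

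Since each $b_j$ is a zero, there would then be an assignment $\sigma\colon\{1,\dots,k\}\to\{1,\dots,k\}$ together with integers $n_j$ such that $b_j=a_{\sigma(j)}p^{n_j}$. I would next verify that $\sigma$ is injective, hence a bijection: if $\sigma(i)=\sigma(j)$ for some $i\neq j$, then
$b_i/b_j=p^{n_i-n_j}\in p^{\mathbb Z}$, contradicting the assumption that the $b_j$ are pairwise inequivalent modulo $p^{\mathbb Z}$.

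With $\sigma$ a permutation of $\{1,\dots,k\}$, taking the product over $j$ yields
\begin{gather*}
b_1\cdots b_k \;=\; \bigg(\prod_{j=1}^k a_{\sigma(j)}\bigg)\,p^{\,n_1+\cdots+n_k} \;=\; t\,p^{N},
\end{gather*}
with $N:=n_1+\cdots+n_k\in\mathbb Z$. This flatly contradicts the hypothesis $b_1\cdots b_k\notin tp^{\mathbb Z}$, so the assumption $C\neq 0$ is untenable and $f\equiv 0$.

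The main obstacle, such as it is, lies in cleanly accounting for the possibility that several of the $a_l$'s coincide modulo $p^{\mathbb Z}$, which would merge some of the geometric progressions making up the zero locus and require a multiplicity argument. The injectivity step above sidesteps this automatically, since having $k$ pairwise-inequivalent zeros $b_j$ forces the progressions $a_{\sigma(j)}p^{\mathbb Z}$ to be genuinely distinct, so no separate case analysis is needed.
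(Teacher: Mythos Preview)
Your argument is correct and is precisely the natural way to unpack what the paper leaves implicit: the paper does not actually prove this lemma, but merely records the factorisation $f(z)=C\,\theta(z/a_1,\dots,z/a_k;p)$ with $a_1\cdots a_k=t$, observes that the zero set of a non-zero such $f$ is $\bigcup_j a_jp^{\mathbb Z}$, and then declares the lemma a classical consequence. Your permutation/pigeonhole step is exactly the standard filling-in of that consequence, so there is nothing to compare.
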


\subsection{Elliptic hypergeometric series}

A one-variable elliptic hypergeometric series $\sum_k a_k$ is a formal or convergent series such that $a_{k+1}/a_k=f(k)$ for some elliptic function $f$. The standard notation for such series is based on~elliptic shifted factorials
\begin{gather*}
(a;q,p)_k=\prod\limits_{j=0}^{k-1}\theta\big(aq^j;p\big),\qquad
k\in\mathbb Z_{\geq 0}.
\end{gather*}
We will use the condensed notation
\begin{gather*}
(a_1,\dots,a_m;q,p)_k=(a_1;q,p)_k\dotsm (a_m;q,p)_k.
\end{gather*}
Moreover, we will often write $|\mathbf k|=\sum_i k_i$.

Throughout the paper we implicitly assume that all parameters are generic, so that we never divide by zero. In particular, it is convenient to assume that $q^k\notin p^{\mathbb Z}$ for $k\in\mathbb Z_{> 0}$. Otherwise the factor $(q;q,p)_k$, which frequently appears in denominators, vanishes.

We will write
\begin{gather*}
\Delta(\mathbf x;p)=\Delta(x_1,\dots,x_r;p)=\prod\limits_{1\leq i<j\leq r} x_j\theta(x_i/x_j;p).
\end{gather*}
Then, $\Delta(\mathbf x;0)=\prod_{1\leq i<j\leq r} (x_j-x_i)$ can be identified with the Weyl denominator of the root system $A_{r-1}$. Similarly,
$\Delta(\mathbf x;p)$ is essentially the Weyl denominator of the affine root system~$A_{r-1}^{(1)}$~\cite{mac}.

 We will need the $A_r$ elliptic Jackson summation
\begin{gather}
\underset{i=1,\dots,r}{\sum_{0\le k_i\le n_i}}
\frac{\Delta\big(\mathbf x q^{\mathbf k};p\big)}{\Delta(\mathbf x;p)}
\prod\limits_{i=1}^r\bigg(\frac{\ta\big(ax_iq^{|\bk|+k_i};p\big)}{\ta(ax_i;p)}\frac{(ax_i;q,p)_{|\bk|} (dx_i,ex_i;q,p)_{k_i}}
{\big(ax_iq^{n_i+1};q,p\big)_{|\bk|} (ax_iq/b,ax_iq/c;q,p)_{k_i}}\bigg)\nonumber
\\ \qquad\quad{}
\times
\prod\limits_{i,j=1}^r\frac{\big(q^{-n_j}x_i/x_j;q,p\big)_{k_i}}{(qx_i/x_j;q,p)_{k_i}}
\frac{(b,c;q,p)_{|\bk|}}{(aq/d,aq/e;q,p)_{|\bk|}}
q^{|\mathbf k|}\nonumber
\\ \qquad{}\label{r87gl}
=
\frac{(aq/bd,aq/cd;q,p)_{|\bn|}}{(aq/d,aq/bcd;q,p)_{|\bn|}}
\prod\limits_{i=1}^r\frac{(ax_iq,ax_iq/bc;q,p)_{n_i}}{(ax_iq/b,ax_iq/c;q,p)_{n_i}},
\end{gather}
where $a^2q^{|\mathbf n|+1}=bcde$. It is
due to the first author \cite[Corollary~5.3]{r} in general and to Milne~\cite{m} when $p=0$.

If we introduce the parameters $x_{r+1}=a^{-1}$ and $k_{r+1}=-\sum_{i=1}^r k_i$, then some factors from~\eqref{r87gl} can be combined to
\begin{gather*}
\Delta\big(\mathbf x q^{\mathbf k};p\big)\prod\limits_{i=1}^r\ta\big(ax_iq^{|\bk|+k_i};p\big)
=\big(x_{r+1}q^{k_{r+1}}\big)^r\Delta\big(x_1q^{k_1},\dots,x_{r+1}q^{k_{r+1}};p\big).
\end{gather*}
This is the reason why \eqref{r87gl} is associated to $A_r$ rather than $A_{r-1}$.
Rational limit cases of~this type of series play a role in
the representation theory of the corresponding unitary group~$\mathrm{SU}(r+1)$~\cite{hbl}.
In the present paper, we use $A_r$ to label also $r$-dimensional series that are derived from \eqref{r87gl}, even though
 the factor $\prod_{i=1}^r\ta\big(ax_iq^{|\bk|+k_i};p\big)$ is absent, see, e.g.,~\eqref{n87gl}.

We will also need the $C_r$ elliptic Jackson summation
\begin{gather}
\underset{i=1,\dots,r}{\sum_{0\le k_i\le n_i}}
\frac{\Delta\big(\mathbf x q^{\mathbf k};p\big)}{\Delta(\mathbf x;p)}q^{|\mathbf k|}
\prod\limits_{1\le i\le j\le r}\frac{\ta\big(ax_ix_jq^{k_i+k_j};p\big)}{\ta(ax_ix_j;p)}
\prod\limits_{i,j=1}^r\frac{\big(q^{-n_j}x_i/x_j,ax_ix_j;q,p\big)_{k_i}}
{\big(qx_i/x_j,ax_ix_jq^{n_j+1};q,p\big)_{k_i}}\nonumber
\\ \qquad\quad{}
\times
\prod\limits_{i=1}^r\frac{(bx_i,cx_i,dx_i,ex_i;q,p)_{k_i}}
{(ax_iq/b,ax_iq/c,ax_iq/d,ax_iq/e;q,p)_{k_i}}\nonumber
\\ \qquad{}
=\frac{\prod\limits_{i,j=1}^r(ax_ix_jq;q,p)_{n_i}}{\prod\limits_{1\le i<j\le r}\!(ax_ix_jq;q,p)_{n_i+n_j}}\frac{(aq/bc,aq/bd,aq/cd;q,p)_{|\mathbf n|}}
{\prod\limits_{i=1} ^r\!\big(ax_iq/b,ax_iq/c,ax_iq/d,aq^{|\mathbf n|-n_i+1}/bcdx_i;q,p\big)_{n_i}},\!\!
\label{cr87gl}
\end{gather}
where $a^2q^{|\mathbf n|+1}=bcde$. It is
due to the first author \cite[Corollary~5.3]{r} in general and, independently, to
Denis and Gustafson~\cite{dg} and
Milne and Lilly~\cite{ml2} when $p=0$.

\section{Elliptic multidimensional matrix inversions}\label{mis}

\subsection{General form of the matrix inversions}
\label{gmis}

We will consider matrices
 $f=(f_{\bf nk})_{{\bf n}, {\bf k}\in\mathbb Z^r}$
labelled by pairs of multi-indices. All matrices that appear are
lower-triangular in the sense that $f_{\bf nk}=0$ unless $\bf
n\geq\bf k$, that is, $n_i\geq k_i$ for all $i$. Then,
$f$ and $g=(g_{\bf nk})_{{\bf n}, {\bf k}\in\mathbb Z^r}$
are inverse matrices if
\begin{gather}\label{fg}
\sum_{\bf l\leq k\leq n}f_{\bf nk}g_{\bf kl}=\delta_{\bf nl} \end{gather}
or, equivalently,
\begin{gather*}
\sum_{\bf l\leq k\leq n}g_{\bf nk}f_{\bf kl}=\delta_{\bf nl}.
\end{gather*}

Our first main result is the following explicit pair of
 inverse matrices. Since it will be used to~obtain results for multiple hypergeometric series associated with the root system $A_r$, we refer to it as an
 elliptic $A_r$ matrix inversion. Here, and in the remainder of Section~\ref{gmis}, we write
 $\theta(x)=\theta(x;p)$, where $p$ is viewed as fixed.

\begin{Theorem}[an elliptic $A_r$ matrix inversion]\label{amit}
Let
$(a(t))_{t\in\mathbb Z}$ and $(c_j(k))_{k\in\mathbb
Z}$, $1\leq j\leq r$,
 be~arbitrary sequences of scalars. Then the
lower-triangular matrices
\begin{gather*}
f_{\bf nk}=\frac{\prod\limits_{t=|{\bf k}|}^{|{\bf n}|-1}\bigg\{
\theta(a(t)c_1(k_1)\dotsm
c_r(k_r))\prod\limits_{j=1}^r\theta(a(t)/c_j(k_j))\bigg\}}
{\prod\limits_{i=1}^r\prod\limits_{t=k_i+1}^{n_i}\bigg\{\theta(c_i(t)c_1(k_1)\dotsm
c_r(k_r))\prod\limits_{j=1}^r\theta(c_i(t)/c_j(k_j))\bigg\}}
\end{gather*}
and
\begin{gather*}
g_{\bf kl}=
\frac{\theta\big(a({|{\bf l}|)}c_1(l_1)\dotsm c_r(l_r)\big)}
{\theta\big(a({|{\bf k}|})c_1(k_1)\dotsm c_r(k_r)\big)}
\prod\limits_{1\leq i<j\leq r}\frac{\theta(c_i(l_i)/c_j(l_j))}
{\theta(c_i(k_i)/c_j(k_j))}
\prod\limits_{j=1}^r\frac{c_j(l_j)^j\theta(a({|{\bf l}|})/c_j(l_j))}
{c_j(k_j)^j\theta(a({|{\bf k}|})/c_j(k_j))}
\\ \phantom{g_{\bf kl}=}{}
\times\frac{\prod\limits_{t=|{\bf l}|+1}^{|{\bf k}|}\bigg\{
\theta(a(t)c_1(k_1)\dotsm c_r(k_r))\prod\limits_{j=1}^r\theta(a(t)/c_j(k_j))\bigg\}}
{\prod\limits_{i=1}^r\prod\limits_{t=l_i}^{k_i-1}\bigg\{\theta(c_i(t)c_1(k_1)\dotsm
c_r(k_r))\prod\limits_{j=1}^r\theta(c_i(t)/c_j(k_j))\bigg\}}
\end{gather*}
are mutually inverse.
\end{Theorem}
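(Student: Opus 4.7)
The plan is to verify the defining relation $\sum_{\bl\le\bk\le\bn}f_{\bn\bk}g_{\bk\bl}=\delta_{\bn\bl}$ directly. For the diagonal case $\bn=\bl$, the only surviving summand is $\bk=\bn$, and every product range of the form $[|\bn|,|\bn|-1]$, $[n_i+1,n_i]$, $[|\bn|+1,|\bn|]$, $[n_i,n_i-1]$ is empty; moreover the prefactor ratios in $g_{\bn\bn}$ all equal $1$, yielding $f_{\bn\bn}g_{\bn\bn}=1$.

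For $\bn\ne\bl$ with $\bn\ge\bl$, substitute the explicit formulas. The numerator products $\prod_{t=|\bk|}^{|\bn|-1}\{\cdots\}$ of $f_{\bn\bk}$ and $\prod_{t=|\bl|+1}^{|\bk|}\{\cdots\}$ of $g_{\bk\bl}$ merge into a $\bk$-independent product over $t\in[|\bl|+1,|\bn|-1]$, together with a single extra factor at $t=|\bk|$; similarly the denominator products from $f$ and $g$ telescope in each coordinate to $\prod_{t\in[l_i,n_i]\setminus\{k_i\}}\{\cdots\}$. After pulling out every factor that does not depend on $\bk$, the vanishing of the sum reduces to showing that a specific theta-rational sum over the box $\bl\le\bk\le\bn$ is identically zero.

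The key technical step is to recognize this residual sum as an iterated instance of Gustafson's elliptic partial-fraction identity~\eqref{cpf}. My preferred route is induction on the dimension $r$: with $k_2,\dots,k_r$ held fixed, the inner sum over $k_1$ should have the shape of the one-variable elliptic Krattenthaler--Warnaar matrix inversion \cite[Lemma~3.2]{w}, with effective parameters $\tilde a(t)=a(t)c_2(k_2)\cdots c_r(k_r)$ and $\tilde c(k)=c_1(k)$; this collapses the inner sum to a Kronecker delta and reduces the claim to the analogous $(r-1)$-dimensional version. An alternative would be to view the full sum as a theta function in a single free parameter (for instance $c_{i_0}(n_{i_0})$ for some $i_0$ with $n_{i_0}>l_{i_0}$) of prescribed degree and norm, and then apply Lemma~\ref{tvl} after exhibiting sufficiently many zeros coming from the factors that force consecutive summands to cancel.

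The principal obstacle under either strategy is the bookkeeping: one must rearrange the products $\prod_j\theta(a(t)/c_j(k_j))$ and $\prod_{i<j}\theta(c_i(k_i)/c_j(k_j))$, using the quasi-periodicity~\eqref{qpt} and the addition formula~\eqref{tadd}, so that the summand acquires precisely the shape of \eqref{cpf} or of the one-variable Warnaar inversion. Matching arguments of theta functions across $f$ and $g$, and checking that the factors $c_j(k_j)^j$ together with the leading $\theta$-ratio combine correctly with the Vandermonde-like factors to reproduce the required numerator, is the main computational challenge.
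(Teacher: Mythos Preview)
Your reduction to a residual vanishing sum over the box $\bl\le\bk\le\bn$ is exactly what the paper does (it then shifts to $\bl=\mathbf 0$, obtaining Lemma~\ref{afg}), so that part is fine.  The gap is in how you propose to prove that residual identity.

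Your preferred route~(a), induction on $r$ by peeling off the $k_1$-sum as a one-variable Warnaar inversion, does not work as stated.  With $k_2,\dots,k_r$ frozen, the summand does \emph{not} factor into a one-variable $f_{n_1 k_1}g_{k_1 l_1}$ times something independent of $k_1$: for every $i\ge 2$ the denominator blocks
\[
\prod_{t\ne k_i}\theta\big(c_i(t)c_1(k_1)\cdots c_r(k_r)\big)\,\theta\big(c_i(t)/c_1(k_1)\big)
\]
still carry $c_1(k_1)$, so the inner $k_1$-sum is a genuinely larger object than Warnaar's $r=1$ identity and does not collapse to a Kronecker delta.  Relatedly, the residual sum is not an iterated instance of Gustafson's partial-fraction identity~\eqref{cpf}: for $r\ge 2$ the numerator factors $\theta(a(t)\prod_j c_j(k_j))$ and $\theta(a(t)/c_j(k_j))$ are not of the $\theta(xy^{\pm})$ form in any single variable, so~\eqref{cpf} does not apply.  (The paper does use~\eqref{cpf}, but for the $C_r$ inversion, Lemma~\ref{cfg}, which has a different structure.)

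Your alternative~(b) is the right idea, but the free parameter to isolate is $a(1)$, not $c_{i_0}(n_{i_0})$.  The crucial point is that $a(1)$ occurs only in the \emph{numerator} of the residual summand, so the whole sum is visibly a theta function of $a(1)$ of degree $r+1$ and norm~$1$.  Specialising $a(1)=c_l(s)$ for any $l$ and any $0\le s\le n_l$ kills exactly the terms with $k_l=s$; the surviving sum is the same identity with $n_l$ replaced by $n_l-1$ and with $a(1)$ and $c_l(s)$ deleted from the parameter lists.  This sets up an induction on $|\bn|$ (not on $r$), and since one obtains $r+|\bn|>r+1$ generically distinct zeros, Lemma~\ref{tvl} forces the sum to vanish.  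Trying instead to use $c_{i_0}(n_{i_0})$ as the variable is awkward because it sits in both numerator and denominator, so the sum is not a priori entire in that variable.
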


As we explain in more detail in Section~\ref{smis}, Theorem~\ref{amit}
contains two matrix inversions due~to Bhatnagar and the second author \cite{bs} as special cases.
The case $p=0$ of~Theorem~\ref{amit} is~\cite[Theorem~3.1]{s}. (The additional
parameter $b$ present in \cite{s} may be
 suppressed by a change of variables.) It contains several previously
known matrix inversions \cite{bm,lm,m,ml} as special cases.
The case $r=1$ is due to Warnaar \cite[Lemma
3.2]{w}, see also \cite{rs}.

We will prove Theorem~\ref{amit} by verifying \eqref{fg}.
We assume
${\bf n}>{\bf l}$, since otherwise \eqref{fg} is trivial. Pulling out the factors independent of ${\bf k}$, we then have to prove that
\begin{gather*}
\sum_{\bf l\leq k\leq n}\frac{\prod\limits_{t=|{\bf l}|+1}^{|{\bf n}|-1}\bigg\{
\theta(a(t)c_1(k_1)\dotsm
c_r(k_r))\prod\limits_{j=1}^r\theta(a(t)/c_j(k_j))\bigg\}}
{\prod\limits_{i=1}^r\prod\limits_{t=l_i,\,t\neq k_i}^{n_i}\bigg\{\theta(c_i(t)c_1(k_1)\dotsm
c_r(k_r))\prod\limits_{j=1}^r\theta(c_i(t)/c_j(k_j))\bigg\}}
\\ \phantom{\sum_{\bf l\leq k\leq n}}{}
\times\prod\limits_{1\leq i<j\leq r}\frac{1}{\theta(c_i(k_i)/c_j(k_j))}
\prod\limits_{j=1}^r\frac{1}{c_j(k_j)^j}=0.
\end{gather*}
After the substitutions $n_i\mapsto n_i+l_i$, $k_i\mapsto k_i+l_i$, $c_i(k)\mapsto c_i(k-l_i)$ (for $i=1,\dots,r$) and $a(k)\mapsto a(k-|\bf l|)$,
this identity is reduced to the case
 ${\bf l}=\bf 0$. Thus, Theorem~\ref{amit} will
 follow from the following Lemma.

\begin{Lemma}\label{afg}
Let $n_1,\dots,n_r$ be non-negative integers, not all equal to zero,
and let $a(1),\dots$, $a({|{\bf n}|-1})$ and
$c_j(k)$, $1\leq j\leq r$, $0\leq k\leq n_j$, be arbitrary scalars.
Then
\begin{gather*}
\sum_{k_1,\dots,k_r=0}^{n_1,\dots,n_r}\frac{\prod\limits_{t=1}^{|{\bf n}|-1}\bigg\{
\theta(a(t)c_1(k_1)\dotsm
c_r(k_r))\prod\limits_{j=1}^r\theta(a(t)/c_j(k_j))\bigg\}}
{\prod\limits_{i=1}^r\prod\limits_{t=0,\,t\neq k_i}^{n_i}\bigg\{\theta(c_i(t)c_1(k_1)\dotsm
c_r(k_r))\prod\limits_{j=1}^r\theta(c_i(t)/c_j(k_j))\bigg\}}
\\ \phantom{\sum_{k_1,\dots,k_r=0}^{n_1,\dots,n_r}}{}
\times\prod\limits_{1\leq i<j\leq r}\frac{1}{\theta(c_i(k_i)/c_j(k_j))}
\prod\limits_{j=1}^r\frac{1}{c_j(k_j)^j}=0.
\end{gather*}
\end{Lemma}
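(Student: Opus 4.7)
The plan is to argue by induction on $|\mathbf{n}|$, with $r$ held fixed. The base case $|\mathbf{n}|=1$ (exactly one $n_m$ equals $1$, the rest vanish) reduces to a two-term sum: the inner product over $t=1,\dots,|\mathbf{n}|-1$ is empty, and after massive cancellation the $k_m=0$ and $k_m=1$ contributions are related by a single application of the inversion formula~\eqref{ti} and sum to zero. For the inductive step with $|\mathbf{n}|\ge 2$, assume the lemma for all nonzero tuples of strictly smaller total size and regard the left-hand side as a function $F(\alpha)$ of $\alpha=a(|\mathbf{n}|-1)$, all other parameters held fixed. The variable $\alpha$ enters each summand only through the $t=|\mathbf{n}|-1$ factor $\theta(\alpha c_1(k_1)\dotsm c_r(k_r))\prod_{j=1}^r\theta(\alpha/c_j(k_j))$, and a direct application of the quasi-periodicity~\eqref{qpt} shows that this factor, hence $F$ itself, is a theta function of degree $r+1$ and norm $1$ in $\alpha$ in the sense of Definition~\ref{htfd}. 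By Lemma~\ref{tvl}, it suffices to exhibit $r+1$ zeros of $F$ that pairwise differ modulo $p^{\mathbb{Z}}$ and whose product lies outside $p^{\mathbb{Z}}$.

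For each $m\in\{1,\dots,r\}$, I will show $F(c_m(n_m))=0$. If $n_m=0$ then every summand is forced to have $k_m=0$, so $\theta(\alpha/c_m(k_m))=\theta(1)=0$ kills it. If $n_m>0$, the summands with $k_m=n_m$ vanish for the same reason, while for $k_m<n_m$ the $t=|\mathbf{n}|-1$ numerator factor evaluated at $\alpha=c_m(n_m)$ is identical to the denominator factor coming from the pair $(i,t)=(m,n_m)$, and the two cancel. What remains is, term for term, the summand of the lemma for the tuple $\mathbf{n}'=(n_1,\dots,n_m-1,\dots,n_r)$, which has $|\mathbf{n}'|=|\mathbf{n}|-1<|\mathbf{n}|$ and is nonzero since $|\mathbf{n}|\ge 2$; so the result is zero by the inductive hypothesis.

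For the $(r+1)$th zero, pick any $i_0$ with $n_{i_0}>0$ and set $\alpha=c_{i_0}(0)$. Summands with $k_{i_0}=0$ vanish from $\theta(1)=0$; for $k_{i_0}>0$ the $t=|\mathbf{n}|-1$ numerator factor now cancels the $(i,t)=(i_0,0)$ denominator factor, and after the reindexing $k_{i_0}\mapsto k_{i_0}-1$ together with the relabeling $c_{i_0}(k)\mapsto c_{i_0}(k+1)$, the remaining expression is once again an instance of the lemma for a tuple of total size $|\mathbf{n}|-1$, so it vanishes by induction. Under the standing genericity assumptions, the $r+1$ points $c_1(n_1),\dots,c_r(n_r),c_{i_0}(0)$ are pairwise inequivalent modulo $p^{\mathbb{Z}}$ and have product $c_{i_0}(0)\prod_{m=1}^r c_m(n_m)\notin p^{\mathbb{Z}}$, so Lemma~\ref{tvl} yields $F\equiv 0$, which is the asserted identity.

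The principal technical hurdle will be the bookkeeping in the two cancellation arguments: one must check carefully that after each cancellation every remaining numerator and denominator $\theta$-factor, together with the antidiagonal factors $\theta(c_i(k_i)/c_j(k_j))^{-1}$ and the weights $c_j(k_j)^{-j}$, reassembles into precisely the summand of the lemma for the reduced index tuple, with particular care required for the shift $c_{i_0}(k)\mapsto c_{i_0}(k+1)$ that intervenes in the second cancellation. Everything else is structural.
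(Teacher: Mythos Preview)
Your argument is correct and follows essentially the same route as the paper: induction on $|\mathbf n|$, viewing the sum as a theta function of degree $r+1$ and norm $1$ in one of the $a$-parameters, and exhibiting enough zeros of the form $c_l(t)$ (where the term $k_l=t$ dies from $\theta(1)=0$ and the remaining terms collapse to the lemma with $|\mathbf n|$ reduced by one) to force vanishing via Lemma~\ref{tvl}. The only cosmetic differences are that the paper works with $a(1)$ rather than $a(|\mathbf n|-1)$ (immaterial by symmetry) and notes that \emph{every} $c_l(t)$, $0\le t\le n_l$, is a zero, giving $r+|\mathbf n|$ of them, whereas you pick out just the $r+1$ points $c_1(n_1),\dots,c_r(n_r),c_{i_0}(0)$ needed to invoke Lemma~\ref{tvl}.
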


\begin{proof}
The case $|{\bf n}|=1$ is easily verified. We thus assume
$|{\bf n}|>1$, and prove the result by~induction on $|{\bf n}|$.
Let $f$ be the left-hand
side of the identity, considered as a function of~$a(1)$. We~observe that $f$
 is a theta function of
degree $r+1$, nome $p$ and norm $1$. Moreover, if~\mbox{$a(1)=c_l(t)$}, for some $1\leq l\leq r$ and $0\leq
t\leq n_l$, the term with $k_l=t$ vanishes. The sum is then
reduced to a sum of the same form, but with $n_l$ replaced by $n_l-1$,
and with $a(1)$ and~$c_l(t)$ deleted from the parameter sequences. By
the induction hypothesis, that sum vanishes. This shows that
$f$ has $r+|{\bf n}|$ zeroes, which may without loss of generality be considered generic. It~follows from Lemma \ref{tvl} that $f$ is
 identically
zero.
\end{proof}

\begin{Remark} In the proof we have implicitly assumed $p\neq 0$. When $p=0$, the argument involving theta functions can
be replaced by a polynomial argument. Namely,
in that case $f$ is a polynomial of degree $r+1$ with $r+|{\bf
n}|>r+1$ different zeroes, and thus vanishes identically. The resulting proof
is different from the one given in~\cite{s}.
\end{Remark}

Next we give a matrix inversion that we associate with the root system $BC_r$. It contains a~matrix inversion from \cite{bs} as a special case, see Section~\ref{smis}.

\begin{Theorem}[an elliptic $BC_r$ matrix inversion]\label{bcmit}
Let
$(a(t))_{t\in\mathbb Z}$ and $(c_j(k))_{k\in\mathbb
Z}$, $1 \leq j \leq r$,
 be~arbitrary sequences of scalars.
 Then the
lower-triangular matrices
\begin{gather*}
f_{\bf nk}=\frac{\prod\limits_{j=1}^r\prod\limits_{t=|{\bf k}|}^{|{\bf
n}|-1}\theta\big(c_j(k_j)a(t)^\pm\big)}
{\prod\limits_{i,j=1}^r\prod\limits_{t=k_i+1}^{n_i}\theta\big(c_j(k_j)c_i(t)^\pm\big)}
\end{gather*}
and
\begin{gather*}
g_{\bf kl}=\prod\limits_{1\leq i<j\leq r}
\frac{\theta\big(c_j(l_j)c_i(l_i)^\pm\big)}
{\theta\big(c_j(k_j)c_i(k_i)^\pm\big)}
\prod\limits_{j=1}^r\frac{c_j(k_j)^j\theta\big(c_j(l_j)a({|{\bf l}|})^\pm\big)}
{c_j(l_j)^j\theta\big(c_j(k_j)a({|{\bf k}|})^\pm\big)}
\frac{\prod\limits_{j=1}^r\prod\limits_{t=|{\bf l}|+1}^{|{\bf k}|}
\theta\big(c_j(k_j)a(t)^\pm\big)}
{\prod\limits_{i,j=1}^r\prod\limits_{t=l_i}^{k_i-1}\theta\big(c_j(k_j)c_i(t)^\pm\big)}
\end{gather*}
are mutually inverse.
\end{Theorem}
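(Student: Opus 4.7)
The plan is to replicate the proof of Theorem~\ref{amit} in this $BC_r$ setting, verifying the inversion relation by showing $\sum_{{\bf l}\le{\bf k}\le{\bf n}} f_{\bf nk}g_{\bf kl}=\delta_{\bf nl}$. The diagonal case ${\bf n}={\bf l}$ is immediate, so the real work is the case ${\bf n}>{\bf l}$. First I would form the product $f_{\bf nk}g_{\bf kl}$: the factor $\theta(c_j(k_j)a(|{\bf k}|)^\pm)$ in the denominator of $g_{\bf kl}$ cancels the $t=|{\bf k}|$ entry in the numerator of $f_{\bf nk}$, while the extra $\prod_{1\le i<j\le r}1/\theta(c_j(k_j)c_i(k_i)^\pm)$ in $g_{\bf kl}$ supplies the diagonal pieces $t=k_i$, $i\ne j$, missing from the merged denominator. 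After pulling out all factors that are independent of~${\bf k}$ and performing the affine shifts $n_i\mapsto n_i+l_i$, $k_i\mapsto k_i+l_i$, $c_i(k)\mapsto c_i(k-l_i)$, $a(k)\mapsto a(k-|{\bf l}|)$, the claim reduces to the case ${\bf l}={\bf 0}$, a $BC_r$ analogue of Lemma~\ref{afg}.

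This reduced identity is then proved by induction on $|{\bf n}|$. The base case $|{\bf n}|=1$ is checked directly, since the $a(t)$-product then runs over the empty range. For $|{\bf n}|>1$, I would regard the sum as a function of $a(1)$. Each factor $\theta(c_j(k_j)a(1)^\pm)$ is, by the inversion formula \eqref{ti} and the quasi-periodicity \eqref{qpt}, a theta function of degree~$2$ in~$a(1)$, so the entire sum is a theta function $F(a(1))$ of degree~$2r$ in the sense of Definition~\ref{htfd}.

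The crucial step is the zero counting. For each $1\le l\le r$ and $0\le t\le n_l$, both specializations $a(1)=c_l(t)$ and $a(1)=c_l(t)^{-1}$ annihilate the factor $\theta(c_l(t)a(1)^\pm)$ present in every summand with $k_l=t$; under the standing genericity assumption these are the only vanishing summands, and the surviving sum is the reduced identity with $n_l$ replaced by $n_l-1$ and the parameters $a(1),c_l(t)$ deleted, which vanishes by the inductive hypothesis. This yields $2(|{\bf n}|+r)$ generically distinct zeros of~$F$, strictly more than the degree~$2r$, so Lemma~\ref{tvl} forces $F\equiv 0$. The main obstacle is the combinatorial bookkeeping in the first step, where one rearranges $f_{\bf nk}g_{\bf kl}$ into the symmetric form on which the induction can act; once this is in place, the theta-function argument is a verbatim transcription of the one used to prove Lemma~\ref{afg}, and the polynomial variant of the \textit{Remark} following that lemma remains available in the trigonometric case $p=0$.
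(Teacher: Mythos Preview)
Your direct inductive argument is correct and coincides with the \emph{second} proof the paper sketches: the paper writes out the reduced identity (the $BC_r$ analogue of Lemma~\ref{afg}) and remarks that it can be established by the same theta-function induction, without spelling out the details. Your degree count ($2r$ in $a(1)$) and zero count ($2(|\mathbf n|+r)$, via $a(1)=c_l(t)^{\pm 1}$) are exactly what that sketch requires.

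The paper's \emph{primary} proof, however, takes a genuinely different route. It first specializes Theorem~\ref{amit} to $p=0$, rescales $a(t)\mapsto ba(t)$, $c_j(k)\mapsto bc_j(k)$ and lets $b\to 0$, obtaining a purely rational matrix inversion with factors of the shape $1-x/y$. It then \emph{reintroduces} the elliptic nome via the substitution
\[
c_j(k)\mapsto \frac{\theta(c_j(k)u^\pm;p)}{\theta(c_j(k)v^\pm;p)},\qquad
a(t)\mapsto \frac{\theta(a(t)u^\pm;p)}{\theta(a(t)v^\pm;p)},
\]
and applies Weierstrass' addition formula~\eqref{tadd} to each factor; the auxiliary variables $u$, $v$ cancel from~\eqref{fg}, leaving precisely the $\theta(xy^\pm)$ factors of Theorem~\ref{bcmit}. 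This argument buys something yours does not: it shows that the elliptic $BC_r$ inversion is a formal consequence of the already-proved \emph{trigonometric} $A_r$ inversion, so no new induction is needed. Your approach, by contrast, is self-contained and makes transparent why the $BC_r$ case behaves like a doubled $A_r$ case (degree $2r$ rather than $r+1$, zeros coming in pairs $c_l(t)^{\pm 1}$).
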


Theorem~\ref{bcmit} can be obtained as a special case of
 \cite[Theorem~3.1]{s}, that is, of the trigonometric case of
Theorem~\ref{amit}. More precisely, let $p=0$ in Theorem~\ref{amit}, replace $a(t)$
by $ba(t)$ and~$c_j(k)$ by $b c_j(k)$ and then let $b\rightarrow 0$.
This gives the matrix inversion
\begin{gather*}
f_{\bf nk}=\frac{\prod\limits_{t=|{\bf k}|}^{|{\bf n}|-1}\prod\limits_{j=1}^r\left(1-a(t)/c_j(k_j)\right)}
{\prod\limits_{i, j=1}^r\prod\limits_{t=k_i+1}^{n_i}(1-c_i(t)/c_j(k_j))},
\\
g_{\bf kl}=\!\prod\limits_{1\leq i<j\leq r}
\frac{(1-c_i(l_i)/c_j(l_j))}{(1-c_i(k_i)/c_j(k_j))}
\prod\limits_{j=1}^r\frac{c_j(l_j)^j(1-a({|{\bf l}|})/c_j(l_j))}
{c_j(k_j)^j(1-a({|{\bf k}|})/c_j(k_j))}
\frac{\prod\limits_{t=|{\bf l}|+1}^{|{\bf k}|}
\prod\limits_{j=1}^r(1-a(t)/c_j(k_j))}
{\prod\limits_{i,j=1}^r\prod\limits_{t=l_i}^{k_i-1}(1-c_i(t)/c_j(k_j))}.
\end{gather*}
Let us now reintroduce the elliptic nome by substituting
\begin{gather*}
c_j(k)\mapsto \frac{\theta\big(c_j(k)u^\pm\big)}{\theta\big(c_j(k)v^\pm\big)},\qquad a(t)\mapsto\frac{\theta\big(a(t)u^\pm\big)}{\theta\big(a(t)v^\pm\big)}.
\end{gather*}
After applying \eqref{tadd} to all factors, the auxiliary variables $u$ and $v$
only appear in factors that can be cancelled from~\eqref{fg}. This eventually leads to
Theorem~\ref{bcmit}.

It is also possible to prove Theorem~\ref{bcmit} directly in a similar way as
 Theorem~\ref{amit}. The~ana\-logue of Lemma~\ref{afg}
is then the identity
\begin{gather*}
\sum_{k_1,\dots,k_r=0}^{n_1,\dots,n_r}\frac{\prod\limits_{j=1}^r
\bigg\{c_j(k_j)^j\prod\limits_{t=1}^{|{\bf n}|-1}\theta\big(c_j(k_j)a(t)^\pm\big)\bigg\}}
{\prod\limits_{i,j=1}^r\prod\limits_{t=0,\,t\neq
k_i}^{n_i}\theta\big(c_j(k_j)c_i(t)^\pm\big)
\prod\limits_{1\leq i<j\leq r}\theta\big(c_j(k_j)c_i(k_i)^\pm\big)}=0.
\end{gather*}
The special case when $n_j\equiv 1$ and $c_j(0)c_j(1)$ is independent of $j$ is equivalent to \cite[Lemma~7.8]{ra}.
Our proof of Theorem~\ref{amit} is a~straight-forward applications of Rains' method for proving that special case.

Finally, we give a matrix inversion that we associate with the
root system $C_r$. The case $p=0$ of Theorem~\ref{cmit} is \cite[Theorem~4.1]{s}.

\begin{Theorem}[an elliptic $C_r$ matrix inversion]\label{cmit}
Let $(c_j(t))_{t\in\mathbb Z}$, $1 \leq j \leq r$, and $b$ be arbitrary scalars. Then the lower-triangular matrices
\begin{gather*}f_{\bf nk}=\prod\limits_{i=1}^r\frac{\prod\limits_{t=k_i}^{n_i-1}
\bigg\{\theta(c_i(t)b/c_1(k_1)\dotsm c_r(k_r))\prod\limits_{j=1}^r\theta(c_i(t)c_j(k_j))\bigg\}}
{\prod\limits_{t=k_i+1}^{n_i} \bigg\{\theta(c_i(t)c_1(k_1)\dotsm c_r(k_r)/b)
\prod\limits_{j=1}^r\theta(c_i(t)/c_j(k_j))\bigg\}}
\end{gather*}
and
\begin{gather*}
g_{\bf kl}=
\prod\limits_{1\leq i<j\leq r}\frac{\theta\big(c_j(l_j)c_i(l_i)^\pm\big)}
{\theta\big(c_j(k_j)c_i(k_i)^\pm\big)}
\prod\limits_{j=1}^r\frac{c_j(l_j)^{r+1-j}\theta\big(c_j(l_j)^2\big)}
{c_j(k_j)^{r+1-j}\theta\big(c_j(k_j)^2\big)}
\\ \phantom{g_{\bf kl}=}{}
\times\prod\limits_{i=1}^r\frac{\prod\limits_{t=l_i+1}^{k_i}
\bigg\{\theta(c_i(t)b/c_1(k_1)\dotsm
 c_r(k_r))\prod\limits_{j=1}^r\theta(c_i(t)c_j(k_j))\bigg\}}
{\prod\limits_{t=l_i}^{k_i-1}
\bigg\{\theta(c_i(t)c_1(k_1)\dotsm c_r(k_r)/b)
\prod\limits_{j=1}^r\theta(c_i(t)/c_j(k_j))\bigg\}}
\end{gather*}
are mutually inverse.
\end{Theorem}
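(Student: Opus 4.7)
The plan is to follow the same strategy as in the proofs of Theorems~\ref{amit} and~\ref{bcmit}: verify $\sum_{\mathbf{l} \leq \mathbf{k} \leq \mathbf{n}} f_{\mathbf{n} \mathbf{k}} g_{\mathbf{k} \mathbf{l}} = \delta_{\mathbf{n} \mathbf{l}}$ directly. The diagonal case $\mathbf{n} = \mathbf{l}$ is immediate since only $\mathbf{k} = \mathbf{n}$ contributes. For $\mathbf{n} > \mathbf{l}$, pull out of $f_{\mathbf{n} \mathbf{k}} g_{\mathbf{k} \mathbf{l}}$ the factors independent of $\mathbf{k}$ and then apply the substitutions $n_i \mapsto n_i + l_i$, $k_i \mapsto k_i + l_i$, $c_i(t) \mapsto c_i(t - l_i)$ to reduce to the case $\mathbf{l} = \mathbf{0}$. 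After merging the partially overlapping product ranges $\prod_{t=k_i}^{n_i-1}$ from $f_{\mathbf{n}\mathbf{k}}$ with $\prod_{t=1}^{k_i}$ from $g_{\mathbf{k}\mathbf{0}}$ (and similarly in the denominators, yielding ranges essentially of the form $\prod_{t=0,\,t\neq k_i}^{n_i}$ with controlled multiplicities at $t=k_i$), one arrives at a $C_r$ analogue of Lemma~\ref{afg} asserting that a specific explicit sum over $\mathbf{0} \leq \mathbf{k} \leq \mathbf{n}$ vanishes whenever $|\mathbf{n}| > 0$.

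Prove this lemma by induction on $|\mathbf{n}|$. The base case $|\mathbf{n}| = 1$ reduces to a two-term identity that follows from Weierstrass' addition formula \eqref{tadd}. For the inductive step, view the sum as a function of $b$. Use the inversion formula \eqref{ti} to rewrite each denominator factor $\theta(c_i(t) c_1(k_1) \cdots c_r(k_r)/b)$ as a constant multiple of $\theta(b/(c_i(t) c_1(k_1) \cdots c_r(k_r)))$, absorbing the explicit $b$-powers and sign factors. After clearing a common denominator in $b$, one recognizes the resulting expression as a theta function $F(b)$ of an explicit degree $d$ and norm $t$ determined by the product ranges and the quasi-periodicity \eqref{qpt}. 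Exhibit $d$ zeros of $F$: the natural candidates are values of $b$ for which a full slice of the sum (say, with some $k_m$ fixed) collapses, leaving a smaller sum of the same shape with some $n_j$ reduced by $1$, which vanishes by the induction hypothesis. Finally, apply Lemma~\ref{tvl} to conclude that $F \equiv 0$ and hence that the original sum vanishes.

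The main obstacle will be the bookkeeping. The $C_r$ summand is considerably heavier than in the $A_r$ case: the symmetric factors $\theta(c_i(t) c_j(k_j))$ contribute extra $c$-dependence, and because $b$ appears both as $\theta(\cdot\, b)$ and $\theta(\cdot/b)$, the sum is not a theta function of $b$ until one clears denominators and carefully tracks the explicit $b$-powers produced by \eqref{ti} and \eqref{qpt}. An appealing shortcut would be to mimic the $b \to 0$ trick used to derive Theorem~\ref{bcmit} from the trigonometric case of Theorem~\ref{amit}; however, since the parameter $b$ in Theorem~\ref{cmit} is essential and has no direct counterpart in Theorem~\ref{amit}, that reduction does not appear to be available, and a dedicated elliptic argument as above seems necessary.
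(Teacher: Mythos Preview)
Your reduction to $\mathbf l=\mathbf 0$ and the formulation of a $C_r$ analogue of Lemma~\ref{afg} are correct, but the inductive step you propose does not go through. In the $A_r$ proof the crucial feature is that $a(1)$ is a \emph{free} member of an arbitrary sequence: setting $a(1)=c_l(t)$ kills the single slice $k_l=t$ and simultaneously deletes $a(1)$ from the parameter list, leaving precisely a smaller instance of the same identity. In the $C_r$ case there is no such sequence; the only candidate variable is the single parameter $b$, and every $b$-dependent theta factor in the summand has argument of the form $c_i(t)b/C(\mathbf k)$ or $c_i(t)C(\mathbf k)/b$ with $C(\mathbf k)=c_1(k_1)\dotsm c_r(k_r)$. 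Thus any specialization of $b$ that makes a numerator factor vanish, say $b=C(\mathbf k)/c_i(t)$, depends on the \emph{entire} multi-index $\mathbf k$, so it kills at most one term rather than a full slice, and the surviving terms do not form a smaller $C_r$ sum with ``one fewer $b$''. For the same reason, after clearing the $\mathbf k$-dependent denominators in $b$ the resulting theta function has degree far exceeding the number of zeros you can produce by induction, so Lemma~\ref{tvl} cannot close the argument. The paper states explicitly that this method of proof does not apply to Theorem~\ref{cmit}, precisely because Theorem~\ref{cmit} lacks the extra parameter sequence $a(t)$.

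The paper's actual proof of Lemma~\ref{cfg} uses a completely different mechanism. One isolates the inner sum over $k_r$ and recognizes it as a specialization of Gustafson's elliptic partial-fraction identity \eqref{cpf} (in the form \eqref{cpfv}); this rewrites $\sum_{k_r} B_{k_1,\dots,k_r}$ as $-\sum_{i=1}^{r-1}S_i$. One then checks that for each $i$ the product $A_{k_1,\dots,k_{r-1}}C_l$ (the summand of $S_i$) is \emph{antisymmetric} under the swap $k_i\leftrightarrow l$, so the double sum $\sum_{k_1,\dots,k_{r-1}}A\cdot S_i$ vanishes termwise in pairs. No induction on $|\mathbf n|$ and no appeal to Lemma~\ref{tvl} is needed. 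If you want to repair your argument, the missing ingredient is Gustafson's identity \eqref{cpf}, not a clever choice of theta variable.
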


Note that the one-dimensional case of Theorem~\ref{cmit} is less
general than for Theorems \ref{amit} and \ref{bcmit}. Namely, it
corresponds to the special case when the sequences $a(t)$ and $c_1(t)$
are proportional. We have not been able to extend
Theorem~\ref{cmit} to the level of generality of the
other two inversions. Unfortunately, this means that our method of
proof of those results does not apply to Theorem~\ref{cmit}.
Instead we give another proof which
is more tedious in its details. The same method can be used to
give alternative proofs of
 Theorems~\ref{amit} and~\ref{bcmit}.

As before, to prove Theorem~\ref{cmit}
it is enough to verify \eqref{fg}
for ${\bf n}>\bf l=\bf 0$, which we state as a Lemma.

\begin{Lemma}\label{cfg}
Let $n_1,\dots,n_r$ be non-negative integers, not all equal to zero,
and let $b$ and
$c_j(k)$, $1\leq j\leq r$, $0\leq k\leq n_j$, be arbitrary scalars.
Then
\begin{gather*}
\sum_{k_1,\dots,k_r=0}^{n_1,\dots,n_r}
\prod\limits_{1\leq i<j\leq r}\frac{\theta(c_j(k_j)c_i(k_i))}{\theta(c_j(k_j)/c_i(k_i))}
\prod\limits_{j=1}^r\frac{\theta(c_j(k_j)b/c_1(k_1)\dotsm c_r(k_r))}{c_j(k_j)^{r+1-j}}
\\ \phantom{\sum_{k_1,\dots,k_r=0}^{n_1,\dots,n_r}}
\times\prod\limits_{i=1}^r\frac{\prod\limits_{t=1}^{n_i-1}
\bigg\{\theta(c_i(t)b/c_1(k_1)\dotsm
 c_r(k_r))\prod\limits_{j=1}^r\theta(c_i(t)c_j(k_j))\bigg\}}
{\prod\limits_{t=0,\,t\neq k_i}^{n_i}
\bigg\{\theta(c_i(t)c_1(k_1)\dotsm c_r(k_r)/b)
\prod\limits_{j=1}^r\theta(c_i(t)/c_j(k_j))\bigg\}}=0.
\end{gather*}
\end{Lemma}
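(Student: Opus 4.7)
The plan is to mirror the proof of Lemma \ref{afg} but, since the $C_r$ sum has no independent auxiliary sequence $a(t)$, to use one of the $c_j(n_j)$ as the free parameter. I would proceed by induction on $N := |\mathbf{n}|$. The base case $N = 1$ reduces to a two-term identity that follows from Weierstrass' addition formula \eqref{tadd}. For the inductive step, I would assume without loss of generality that $n_r \geq 1$ and regard the left-hand side $F$ as a function of $z := c_r(n_r)$, with every other parameter held fixed.

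\emph{Pole clearing.} The variable $z$ enters the summand in two essentially different ways. For indices $\mathbf k$ with $k_r < n_r$ it appears only in the $i = r$, $t = n_r$ denominator factor $\theta\bigl(z\,c_1(k_1)\dotsm c_r(k_r)/b\bigr)\prod_{j=1}^r\theta(z/c_j(k_j))$, while for indices $\mathbf k$ with $k_r = n_r$ it appears throughout the summand via $\prod_m c_m(k_m) = z\prod_{m<r}c_m(k_m)$. Multiplying $F$ by a single product of theta functions in $z$ that absorbs the pole factors coming from all $\mathbf k$ with $k_r < n_r$, together with a balancing monomial in $z$, should produce a holomorphic function $\tilde F(z)$ which, by Definition \ref{htfd}, is a theta function of an explicit degree $D$ and norm $T$ that I would compute.

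\emph{Zero counting.} The candidate zeros of $\tilde F$ are the values $z = c_j(s)$ for pairs $(j,s)\in\{1,\dots,r\}\times\{0,\dots,n_j\}$ with $(j,s)\neq(r,n_r)$, together with the reflected values $z = 1/c_j(s)$ available through the $C_r$ symmetry between $\theta(c_j(k_j)c_i(k_i))$ and $\theta(c_j(k_j)/c_i(k_i))$ and the identity \eqref{ti}. At each such specialization I would match the emerging poles in the $k_r < n_r$ block against vanishing factors from the $k_r = n_r$ block, and show that the residual sum equals, up to a relabelling, the statement of Lemma \ref{cfg} with $n_j$ decremented by one and $c_j(s)$ deleted from the parameter list; by the inductive hypothesis this is zero. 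A direct count should then give more than $D$ such zeros, and Lemma \ref{tvl} forces $\tilde F \equiv 0$, closing the induction.

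The main obstacle I anticipate is the pole-matching in the zero-counting step. Unlike in the proof of Lemma \ref{afg}, where the free parameter $a(1)$ is independent of every $c_j(\cdot)$ and each specialization kills a single summand cleanly, here $z = c_r(n_r)$ is tied both to the $c_r$-sequence through the $t$-products in the denominator and, via the $b$-coupling in $c_1(k_1)\dotsm c_r(k_r)$, to the remaining sequences $c_j$. Consequently, each candidate zero of $\tilde F$ simultaneously produces vanishing numerator factors and vanishing denominator factors, and one must carefully track cancellations between the $k_r = n_r$ and $k_r < n_r$ blocks before arriving at a clean copy of the inductive hypothesis. This is presumably the ``more tedious'' bookkeeping alluded to in the paragraph preceding Theorem \ref{cmit}, and it also accounts for the loss of generality in the one-dimensional case noted in that paragraph.
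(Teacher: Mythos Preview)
Your proposal attempts precisely the approach that the paper explicitly abandons: the paragraph before Theorem~\ref{cmit} says that ``our method of proof of those results does not apply to Theorem~\ref{cmit}'', and the paper instead proves Lemma~\ref{cfg} by a completely different argument --- it isolates the inner sum over $k_r$, rewrites that inner sum via Gustafson's partial-fraction identity~\eqref{cpf} (in the form~\eqref{cpfv}) as $-\sum_{i=1}^{r-1}S_i$, and then kills each $\sum_{k_1,\dots,k_{r-1}}A_{k_1,\dots,k_{r-1}}S_i$ by exhibiting an explicit anti-symmetry in a pair of summation indices. No induction and no Lemma~\ref{tvl} are used.

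The concrete obstruction to your plan is a degree explosion in the pole-clearing step. In Lemma~\ref{afg} the free parameter $a(1)$ appears \emph{only} in numerators, so the sum is directly a theta function of fixed degree $r+1$, and the $r+|\mathbf n|$ zeros suffice. In Lemma~\ref{cfg} your chosen variable $z=c_r(n_r)$ enters the $k_r<n_r$ summands through the $i=r$, $t=n_r$ denominator factor
\[
\theta\!\bigl(z\,c_1(k_1)\dotsm c_r(k_r)/b\bigr)\prod_{j=1}^r\theta\bigl(z/c_j(k_j)\bigr).
\]
The first factor here produces a \emph{distinct} pole for each distinct value of the product $c_1(k_1)\dotsm c_{r-1}(k_{r-1})c_r(k_r)$; for generic parameters this is $n_r\prod_{j<r}(n_j+1)$ poles. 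Clearing them makes the degree $D$ of your $\tilde F$ grow like $\prod_j(n_j+1)$, while the candidate zeros $z=c_j(s)^{\pm1}$ number only $O(|\mathbf n|)$. The zero count can never catch the degree, so Lemma~\ref{tvl} gives no information. This is exactly the lack of an independent $a(\cdot)$-sequence that the paper points to, and it is not a bookkeeping issue that more careful pole-matching can fix.
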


\begin{proof}
 Pulling out all factors independent of $k_r$, our
sum can be written as
\begin{gather}\label{ss}
\sum_{k_1,\dots,k_{r-1}=0}^{n_1,\dots,n_{r-1}}A_{k_1,\dots,k_{r-1}}
\sum_{k_r=0}^{n_r}B_{k_1,\dots,k_{r}},
\end{gather}
where
\begin{gather*}
A_{k_1,\dots,k_{r-1}}
=\theta(b/c_1(k_1)\dotsm c_{r-1}(k_{r-1}))\prod\limits_{1\leq i<j\leq
 r-1}\frac{\theta(c_j(k_j)c_i(k_i))}{\theta(c_j(k_j)/c_i(k_i))}
 \\ \phantom{A_{k_1,\dots,k_{r-1}}=}{}
\times\prod\limits_{j=1}^{r-1}\frac{\prod\limits_{i=1}^r\prod\limits_{t=1}^{n_i-1}
\theta(c_i(t)c_j(k_j))}
{c_j(k_j)^{r+1-j}\prod\limits_{i=1}^{r-1}\prod\limits_{t=0,\,t\neq
 k_i}^{n_i}\theta(c_i(t)/c_j(k_j))\prod\limits_{t=0}^{n_r}\theta(c_r(t)/c_j(k_j))},
\\
B_{k_1,\dots,k_{r}}=
\frac{1}{c_r(k_r)}\prod\limits_{j=1}^{r-1}
\theta(c_j(k_j)b/c_1(k_1)\dotsm c_r(k_r),c_j(k_j)c_r(k_r))
\\ \phantom{B_{k_1,\dots,k_{r}}=}{}
\times\prod\limits_{i=1}^r\frac{\prod\limits_{t=1}^{n_i-1}\theta(c_i(t)b/c_1(k_1)\dotsm
 c_r(k_r),c_i(t)c_r(k_r))}{\prod\limits_{t=0,\,t\neq
 k_i}^{n_i}\theta(c_i(t)c_1(k_1)\dotsm
 c_r(k_r)/b,c_i(t)/c_r(k_r))}.
 \end{gather*}

We now rewrite the inner sum in \eqref{ss} using
 Gustafson's identity \eqref{cpf}. Replacing $a_k\mapsto
a_k/\sqrt\lambda$, $b_k\mapsto b_k\sqrt\lambda$ and using \eqref{ti},
\eqref{cpf} can be written as
\begin{gather}\label{cpfv}
\sum_{l=1}^k\frac{\prod\limits_{j=1}^{k-2}\theta(b_ja_l,b_j\lambda/a_l)}
{a_l\prod\limits_{j=1, j\neq l}^k\theta(a_ja_l/\lambda,a_j/a_l)}=0.
\end{gather}
Consider \eqref{cpfv} with $k=|{\bf n}|+1$ and parameters
\begin{gather*}
(a_1,\dots,a_{|{\bf n}|+1})=
\Big(c_1(0),\dots,\widehat{c_1(k_1)},\dots,c_1(n_1),\dots, c_{r-1}(0),\dots,\widehat{c_{r-1}(k_{r-1})},
\dots,
\\ \phantom{(a_1,\dots,a_{|{\bf n}|+1})=(}
c_{r-1}(n_{r-1}), c_r(0),\dots,c_r(n_r)\Big),
\end{gather*}
where the hats signify that the parameter is absent,
\begin{gather*}
(b_1,\dots,b_{|{\bf n}|-1})=
(c_1(1),\dots,c_1(n_1-1),\dots, c_r(1),\dots,c_r(n_r-1),
c_1(k_1),\dots,c_{r-1}(k_{r-1}))
\end{gather*}
and $\lambda=b/c_1(k_1)\dotsm c_{r-1}(k_{r-1})$. Then \eqref{cpfv}
splits naturally into $r$ parts, $\sum_{i=1}^rS_i=0$, where $S_1$ is
the sum of the first $n_1$ terms, $S_2$ the following $n_2$ terms and
so on until the final sum $S_r$ which has $n_r+1$ terms. It is
easy to check that $S_r$ equals the inner sum in \eqref{ss}, so that
\begin{gather*}
\sum_{k_r=0}^{n_r}B_{k_1,\dots,k_{r}}=-\sum_{i=1}^{r-1}S_i.
\end{gather*}

We will complete the proof by showing that
\begin{gather*}\sum_{k_1,\dots,k_{r-1}=0}^{n_1,\dots,n_{r-1}}A_{k_1,\dots,k_{r-1}}
S_i=0,\qquad i=1,\dots,r-1.\end{gather*}
 By a symmetry argument, it suffices to do this for $i=1$.
We have
\begin{gather*}S_1=\sum_{l=0,\,l\neq k_1}^{n_1}C_l, \end{gather*}
with
\begin{gather*}
C_l=
\frac{1}{c_1(l)}
\prod\limits_{j=1}^{r-1}\theta(c_j(k_j)b/c_1(l)c_1(k_1)\dotsm
 c_{r-1}(k_{r-1}),
 c_j(k_j)c_1(l))
 \\ \phantom{C_l=}{}
\times\frac{\prod\limits_{i=1}^r\prod\limits_{t=1}^{n_i-1}\theta
(c_i(t)b/c_1(l)c_1(k_1)\dotsm
 c_{r-1}(k_{r-1}),c_i(t)c_1(l))}
{\prod\limits_{i=1}^{r}\prod\limits_{t=0,\,t\notin \Lambda_i}^{n_i}
\theta(c_i(t)c_1(l) c_1(k_1)\dotsm
 c_{r-1}(k_{r-1})/b,c_i(t)/c_1(l))},
\end{gather*}
where
\begin{gather*}
\Lambda_i=
\begin{cases}\{k_1,l\}, & i=1,\\ \{k_i\}, &
 i=2,\dots,r-1,\\ \emptyset, & i=r.
\end{cases}
\end{gather*}

After some elementary manipulations, we obtain
\begin{gather*}
A_{k_1,\dots,k_{r-1}}C_{l}=
\frac{(-1)^r\theta(c_1(k_1)c_1(l),\gamma/c_1(k_1),
\gamma/c_1(l))}{c_1(k_1)^2c_1(l) \theta(c_1(l)/c_1(k_1))}
\prod\limits_{2\leq i<j\leq
 r-1}\frac{\theta(c_j(k_j)c_i(k_i))}{\theta(c_j(k_j)/c_i(k_i))}
 \\ \phantom{A_{k_1,\dots,k_{r-1}}C_{l}=}{}
\times\prod\limits_{j=2}^{r-1}\frac{\theta(c_j(k_j)c_1(k_1),c_j(k_j)c_1(l),c_j(k_j)\gamma/
c_1(k_1)c_1(l))\prod\limits_{i=1}^r\prod\limits_{t=1}^{n_i-1}
\theta(c_i(t)c_j(k_j))}{c_j(k_j)^{r+2-j}
\prod\limits_{t=0}^{n_1}\theta(c_1(t)/c_j(k_j))\prod\limits_{i=2}^{r}\prod\limits_{t=0,\,t\notin
 \Lambda_i}^{n_i}\theta(c_i(t)/c_j(k_j))}
 \\ \phantom{A_{k_1,\dots,k_{r-1}}C_{l}=}{}
\times\prod\limits_{i=1}^r\frac{\prod\limits_{t=1}^{n_i-1}
\theta(c_i(t)c_1(k_1),c_i(t)c_1(l),c_i(t)\gamma/c_1(k_1)c_1(l))}
{\prod\limits_{t=0,\,t\notin \Lambda_i}^{n_i}
\theta(c_i(t)/c_1(k_1),c_i(t)/c_1(l),c_i(t)c_1(k_1)c_1(l)/\gamma)},
\end{gather*}
where $\gamma=b/c_2(k_2)\dotsm c_r(k_r)$.
This is visibly anti-symmetric under interchanging
$k_1\leftrightarrow l$. Thus,
\begin{gather*}
\sum_{\substack{k_1,l=0\\k_1\neq l}}^{n_1}A_{k_1,\dots,k_{r-1}}C_{l}=0,
\end{gather*}
which completes the proof of the theorem.
\end{proof}

\subsection{Specializations of the matrix inversions}\label{smis}

For applications to hypergeometric series, one typically specializes the parameters in the matrix inversions to geometric progressions. For ease of reference, we give some specializations of this kind explicitly.

We first consider the special case of Theorem~\ref{amit} when
$a(t)=aq^t$ and $c_j(k)=x_jq^{mk}$ ($1\leq j\leq r$),
with $m$ a non-zero integer. It will be convenient to let
\begin{gather*}
F_{\bn \bk}=\frac{g_{\bk \mathbf 0} f_{\bn \bk}}{f_{\bn \mathbf 0}},
\qquad G_{\bk \bl}=\frac{f_{\bl \mathbf 0} g_{\bk \bl}}{g_{\bk \mathbf 0}}.
\end{gather*}
This is another pair of lower-triangular inverse matrices, which has been normalized
so that $F_{\bn \mathbf 0}=G_{\bn \mathbf 0}=1$ for $\mathbf n\geq\mathbf 0$.
Let us first assume that $m$ is positive. After some elementary manipulations, the corresponding special case of Theorem~\ref{amit} takes the following form.

\begin{Corollary}\label{amipc}
When $m$ is a positive integer, the
lower-triangular matrices
\begin{gather*}
F_{\bf nk}=\frac{\Delta(\bx q^{m\bk};p)}{\Delta(\bx;p)}\frac{\big(aXq^{|\bn|};q,p\big)_{m|\bk|}}{(aXq;q,p)_{m|\bk|}} q^{m|\bk|}
\prod\limits_{i,j=1}^r\frac{\big(q^{-mn_j}x_i/x_j;q^m,p\big)_{k_i}}{\big(q^mx_i/x_j;q^m,p\big)_{k_i}}
\\ \phantom{F_{\bf nk}=}{}
\times
\prod\limits_{i=1}^r\bigg(\frac{\theta\big(Xx_iq^{m(|\bk|+k_i)};p\big)}{\theta(Xx_i;p)}
\frac{(x_i/a;q,p)_{mk_i}\big(Xx_i;q^m,p\big)_{|\bk|}}{\big(x_iq^{1-|\bn|}/a;q,p\big)_{mk_i}
\big(Xx_iq^{m(n_i+1)};q^m,p\big)_{|\bk|}}\bigg)
\end{gather*}
and
\begin{gather*}
G_{\bf kl}=\frac{\Delta\big(\bx q^{m\bl};p\big)}{\Delta(\bx;p)}\frac{\theta \big(aXq^{(m+1)|\bl|};p\big)}{\theta(aX;p)}\frac{(aX;q,p)_{|\bl|}}{\big(aXq^{m|\bk|+1};q,p\big)_{|\bl|}} q^{m|\bl|}
\prod\limits_{i,j=1}^r\frac{\big(q^{-mk_j}x_i/x_j;q^m,p\big)_{l_i}}{\big(q^mx_i/x_j;q^m,p\big)_{l_i}}
\\ \phantom{G_{\bf kl}=}{}
\times
\prod\limits_{i=1}^r\bigg(\frac{\theta\big(aq^{|\bl|-ml_i}/x_i;p\big)}{\theta(a/x_i;p)}
\frac{(a/x_i;q,p)_{|\bl|}\big(Xx_iq^{m|\bk|};q^m,p\big)_{l_i}}{\big(aq^{1-mk_i}/x_i;q,p\big)_{|\bl|}
\big(Xx_iq^m;q^m,p\big)_{l_i}}\bigg),
\end{gather*}
where $X=x_1\dotsm x_r$, are mutually inverse.
\end{Corollary}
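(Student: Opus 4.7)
The plan is to deduce Corollary~\ref{amipc} directly from Theorem~\ref{amit} by substituting $a(t)=aq^t$ and $c_j(k)=x_jq^{mk}$, and then applying the renormalization
\[ F_{\bn\bk}=\frac{g_{\bk\mathbf{0}}}{f_{\bn\mathbf{0}}}\,f_{\bn\bk},\qquad G_{\bk\bl}=\frac{f_{\bl\mathbf{0}}}{g_{\bk\mathbf{0}}}\,g_{\bk\bl}. \]
First I would check that $(F,G)$ remains a pair of mutually inverse matrices: the $g_{\bk\mathbf{0}}$-factors telescope out of the sum, so
\[ \sum_\bk F_{\bn\bk}G_{\bk\bl}=\frac{f_{\bl\mathbf{0}}}{f_{\bn\mathbf{0}}}\sum_\bk f_{\bn\bk}g_{\bk\bl}=\frac{f_{\bl\mathbf{0}}}{f_{\bn\mathbf{0}}}\,\delta_{\bn\bl}=\delta_{\bn\bl}, \]
and since the specialised $f_{\mathbf{0}\mathbf{0}}=g_{\mathbf{0}\mathbf{0}}=1$ (both are empty products), the required normalizations $F_{\bn\mathbf{0}}=G_{\bn\mathbf{0}}=1$ follow automatically.

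The remaining task is to cast the specialised expressions into the stated form. After the substitution, every argument of $\theta$ has the shape $(\text{const})\cdot q^{\text{linear in }t}$, so each finite product over $t$ collapses into an elliptic shifted factorial: outer products with $t$ running over $|\bk|,\dots,|\bn|-1$ (or $|\bl|+1,\dots,|\bk|$) yield $(\,\cdot\,;q,p)$-factorials, while inner products with $t$ running over $k_i+1,\dots,n_i$ (or $l_i,\dots,k_i-1$) yield $(\,\cdot\,;q^m,p)$-factorials. For $F_{\bn\bk}$, the term $\prod_t\theta(aXq^{t+m|\bk|};p)$ (together with its counterpart in $f_{\bn\mathbf{0}}$) produces the ratio $(aXq^{|\bn|};q,p)_{m|\bk|}/(aXq;q,p)_{m|\bk|}$; the factors $\prod_t\theta((a/x_j)q^{t-mk_j};p)$ yield $(x_i/a;q,p)_{mk_i}/(x_iq^{1-|\bn|}/a;q,p)_{mk_i}$ after applying the inversion formula \eqref{ti} to flip the negative exponents; and the factors $\prod_t\theta((x_i/x_j)q^{m(t-k_j)};p)$ give the piece $(q^{-mn_j}x_i/x_j;q^m,p)_{k_i}/(q^mx_i/x_j;q^m,p)_{k_i}$ for $i\neq j$, while the diagonal $i=j$ contributions combine with $g_{\bk\mathbf{0}}$ into the quasi-periodic prefactor $\theta(Xx_iq^{m(|\bk|+k_i)};p)/\theta(Xx_i;p)$. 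Finally, the part $\prod_{i<j}\theta(x_i/x_j)/\theta((x_i/x_j)q^{m(k_i-k_j)};p)$ of $g_{\bk\mathbf{0}}$ assembles together with the explicit $c_j(k_j)^j$ into the Weyl-denominator ratio $\Delta(\bx q^{m\bk};p)/\Delta(\bx;p)$ and the scalar $q^{m|\bk|}$. The derivation of $G_{\bk\bl}$ is entirely parallel.

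The only genuine obstacle is bookkeeping. Repeated applications of \eqref{ti} are required to convert $\theta$-products with decreasing $q$-exponents into standard elliptic Pochhammers with positive shifts, and the resulting compensating powers of $q$, signs, and $x_j$-monomials must be tracked carefully so that they agree with the stated expressions (in particular, the way the ``extra'' $q^{m|\bk|}$ scalar emerges from combining the $c_j(k_j)^j$ factors with contributions from the $\Delta$-ratio and from the inverted Pochhammers). No new identities beyond \eqref{ti}, elementary index shifts, and Theorem~\ref{amit} are needed; the entire argument is a finite, mechanical computation.
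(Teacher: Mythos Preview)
Your approach is exactly the paper's: it specialises Theorem~\ref{amit} via $a(t)=aq^t$, $c_j(k)=x_jq^{mk}$, applies the same renormalization $F_{\bn\bk}=g_{\bk\mathbf 0}f_{\bn\bk}/f_{\bn\mathbf 0}$, $G_{\bk\bl}=f_{\bl\mathbf 0}g_{\bk\bl}/g_{\bk\mathbf 0}$, and then states that ``after some elementary manipulations'' one arrives at the displayed expressions. Your outline of the bookkeeping (using \eqref{ti} and index shifts to convert the theta-products into elliptic Pochhammers and the $\Delta$-ratio) is a faithful unpacking of those elementary manipulations.
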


The case when $m$ is negative can be obtained from
Corollary~\ref{amipc} simply using the natural interpretation
$(a;q,p)_{-n}=\big(aq^{-n};q,p\big)_n^{-1}$ for elliptic shifted factorials with negative subscripts. If we replace $m$ by $-m$ and $x_j$ by $x_j^{-1}$ for $j=1,\dots,r$, the resulting identity takes the following form.

\begin{Corollary}\label{aminc}
When $m$ is a positive integer, the
lower-triangular matrices
\begin{gather*}
F_{\bf nk}=\frac{\Delta\big(\bx q^{m\bk};p\big)}{\Delta(\bx;p)}\frac{(X/a;q,p)_{m|\bk|}}{\big(Xq^{1-|\bn|}/a;q,p\big)_{m|\bk|}} q^{m|\bk|}
\prod\limits_{i,j=1}^r\frac{\big(q^{-mn_j}x_i/x_j;q^m,p\big)_{k_i}}{\big(q^mx_i/x_j;q^m,p\big)_{k_i}}
\\ \phantom{F_{\bf nk}=}{}
\times
\prod\limits_{i=1}^r\bigg(\frac{\theta\big(Xx_iq^{m(|\bk|+k_i)};p\big)}{\theta(Xx_i;p)}
\frac{\big(ax_iq^{|\bn|};q,p\big)_{mk_i}\big(Xx_i;q^m,p\big)_{|\bk|}}{(ax_iq;q,p)_{mk_i}
\big(Xx_iq^{m(n_i+1)};q^m,p\big)_{|\bk|}}\bigg)
\end{gather*}
and
\begin{gather*}
G_{\bf kl}=\frac{\Delta\big(\bx q^{m\bl};p\big)}{\Delta(\bx;p)}\frac{\theta \big(Xq^{(m-1)|\bl|}/a;p\big)}{\theta(X/a;p)}\frac{(a/X;q,p)_{|\bl|}}
{\big(aq^{1-m|\bk|}/X;q,p\big)_{|\bl|}} q^{|\bl|}\prod\limits_{i,j=1}^r
\frac{\big(q^{-mk_j}x_i/x_j;q^m,p\big)_{l_i}}{\big(q^mx_i/x_j;q^m,p\big)_{l_i}}
\\ \phantom{G_{\bf kl}=}{}
\times
\prod\limits_{i=1}^r\bigg(\frac{\theta\big(ax_iq^{|\bl|+ml_i};p\big)}{\theta(ax_i;p)}
\frac{(ax_i;q,p)_{|\bl|}\big(Xx_iq^{m|\bk|};q^m,p\big)_{l_i}}{\big(ax_iq^{mk_i+1};q,p\big)_{|\bl|}
\big(Xx_iq^m;q^m,p\big)_{l_i}}\bigg),
\end{gather*}
where $X=x_1\dotsm x_r$, are mutually inverse.
\end{Corollary}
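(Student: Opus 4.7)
The plan is to obtain Corollary \ref{aminc} directly from Corollary \ref{amipc} by performing the promised substitutions, namely $m\mapsto -m$ followed by $x_j\mapsto x_j^{-1}$ for $j=1,\dots,r$. Since under any such change of variables a pair of mutually inverse lower-triangular matrices remains a pair of mutually inverse lower-triangular matrices, it suffices to verify that, after rewriting, the substituted entries of Corollary \ref{amipc} agree on the nose with the $F_{\bn\bk}$ and $G_{\bk\bl}$ stated in Corollary \ref{aminc}.

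Two rewriting rules do essentially all the work. First, for factors with negative subscripts introduced by $m\mapsto -m$, I would use the standard convention $(a;q,p)_{-n}=1/(aq^{-n};q,p)_n$ to turn each such shifted factorial into one with a positive subscript, swapping its location between numerator and denominator. Second, every theta factor in which the argument has been inverted under $x_j\mapsto x_j^{-1}$ can be brought back to its natural form via the inversion formula \eqref{ti}, at the cost of an explicit sign and a monomial prefactor.

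Then I would work through the transform block by block. The ratio $\Delta(\bx q^{m\bk};p)/\Delta(\bx;p)$ is easily seen to be invariant under the joint substitution up to a monomial in $\bx$ and $q$ that is subsequently absorbed by analogous prefactors emerging from the diagonal theta ratio $\theta(Xx_iq^{m(|\bk|+k_i)};p)/\theta(Xx_i;p)$ after $X\mapsto X^{-1}$ and $x_i\mapsto x_i^{-1}$. The doubly-indexed product $\prod_{i,j}(q^{-mn_j}x_i/x_j;q^m,p)_{k_i}/(q^mx_i/x_j;q^m,p)_{k_i}$ is invariant under a simple renaming of $i$ and $j$. The remaining shifted factorials convert directly: for instance, $(aXq^{|\bn|};q,p)_{m|\bk|}/(aXq;q,p)_{m|\bk|}$ becomes $(X/a;q,p)_{m|\bk|}/(Xq^{1-|\bn|}/a;q,p)_{m|\bk|}$ after $X\mapsto X^{-1}$, $m\mapsto -m$ and one application of the negative-subscript convention, and an analogous calculation converts the $(x_i/a;q,p)_{mk_i}$ factor into $(ax_iq^{|\bn|};q,p)_{mk_i}/(ax_iq;q,p)_{mk_i}$. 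The entries of $G_{\bk\bl}$ are handled in the same manner.

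The main obstacle is purely bookkeeping: ensuring that each application of \eqref{ti} is paired with the sign and monomial it produces, and that these all cancel globally within $F_{\bn\bk}$ and within $G_{\bk\bl}$ separately, so that the clean expressions claimed in Corollary \ref{aminc} genuinely emerge. To keep this under control I would group the auxiliary factors by source (whether they come from $\Delta$, from the diagonal theta ratio, or from the $x$-dependent shifted factorials) and dispatch each group in a fixed order. No identity beyond \eqref{ti} and the negative-subscript convention is required, and conceptually the corollary is a direct translation of Corollary \ref{amipc}.
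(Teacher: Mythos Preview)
Your proposal is correct and follows exactly the route the paper indicates: the paper derives Corollary~\ref{aminc} from Corollary~\ref{amipc} by substituting $m\mapsto -m$ and $x_j\mapsto x_j^{-1}$, interpreting $(a;q,p)_{-n}=(aq^{-n};q,p)_n^{-1}$, and then simplifying. Your outline of the bookkeeping---using the negative-subscript convention together with the inversion formula \eqref{ti} and tracking the resulting monomial prefactors until they cancel---is precisely what is required and matches the paper's approach.
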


Finally, we let
$a(t)=aq^t$ and $c_j(k)=x_jq^{mk}$ ($1\leq j\leq r$) in Theorem~\ref{bcmit}.
Since this result is~symmetric under simultaneous inversion of all the parameters $c_j(k)$, we may assume that $m$ is positive. This gives the following result.

\begin{Corollary}\label{bcmic}
When $m$ is a positive integer, the
lower-triangular matrices
\begin{gather*}
F_{\bf nk}=\frac{\Delta\big(\bx q^{m\bk};p\big)}{\Delta(\bx;p)} q^{m|\bk|}
\prod\limits_{1\leq i\leq j\leq r}\frac{\theta\big(x_ix_jq^{m(k_i+k_j)};p\big)}{\theta(x_ix_j;p)}
\\ \phantom{F_{\bf nk}=}{}
\times\prod\limits_{i,j=1}^r\frac{\big(q^{-mn_j}x_i/x_j,x_ix_j;q^m,p\big)_{k_i}}
{\big(q^mx_i/x_j,q^{m(n_j+1)}x_ix_j;q^m,p\big)_{k_i}}
\prod\limits_{i=1}^r\frac{\big(ax_iq^{|\bn|},x_i/a;q,p\big)_{mk_i}}{\big(ax_iq,x_iq^{1-|\bn|}/a;q,p\big)_{mk_i}}
\end{gather*}
and
\begin{gather*}
G_{\bf kl}=\frac{\Delta\big(\bx q^{m\bl};p\big)}{\Delta(\bx;p)}q^{m|\bl|}\prod\limits_{1\leq i<j\leq r}\frac{\theta\big(x_ix_jq^{m(l_i+l_j)};p\big)}{\theta(x_ix_j;p)}
\prod\limits_{i,j=1}^r\frac{\big(q^{-mk_j}x_i/x_j,x_ix_jq^{mk_j};q^m,p\big)_{l_i}}{\big(q^mx_i/x_j,x_ix_j q^m ;q^m,p\big)_{l_i}}
\\ \phantom{G_{\bf kl}=}{}
\times
\prod\limits_{i=1}^r\bigg(\frac{\theta\big(ax_iq^{|\bl|+ml_i},aq^{|\bl|-ml_i}/x_i;p\big)}
{\theta(ax_i,a/x_i;p)}\frac{(ax_i,a/x_i;q,p)_{|\bl|}}{\big(aq^{1+mk_i}x_i,aq^{1-mk_i}/x_i;q,p\big)_{|\bl|}}\bigg),
\end{gather*}
 are mutually inverse.
\end{Corollary}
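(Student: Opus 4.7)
The plan is to deduce the corollary as a direct specialization of Theorem~\ref{bcmit} with $a(t)=aq^t$ and $c_j(k)=x_jq^{mk}$. First I would record that the normalization $F_{\bn\bk}=g_{\bk\mathbf 0}f_{\bn\bk}/f_{\bn\mathbf 0}$ and $G_{\bk\bl}=f_{\bl\mathbf 0}g_{\bk\bl}/g_{\bk\mathbf 0}$ preserves the inverse relation, since
\begin{gather*}
\sum_{\bl\le\bk\le\bn}F_{\bn\bk}G_{\bk\bl}=\frac{f_{\bl\mathbf 0}}{f_{\bn\mathbf 0}}\sum_{\bl\le\bk\le\bn}f_{\bn\bk}g_{\bk\bl}=\frac{f_{\bl\mathbf 0}}{f_{\bn\mathbf 0}}\delta_{\bn\bl}=\delta_{\bn\bl},
\end{gather*}
and that $F_{\bn\mathbf 0}=G_{\bn\mathbf 0}=1$ by construction. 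Once positivity of $m$ is assumed (the $m<0$ case following from the $c_j\mapsto c_j^{-1}$ symmetry of Theorem~\ref{bcmit}, as already noted in the text), the rest is a bookkeeping exercise.

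Next I would substitute the specializations into each theta factor and collect the products over $t$ into elliptic shifted factorials. Under $a(t)=aq^t$, $c_j(k)=x_jq^{mk}$, one has
\begin{gather*}
\theta\big(c_j(k_j)a(t)^\pm;p\big)=\theta\big(ax_jq^{mk_j+t};p\big)\,\theta\big(x_jq^{mk_j-t}/a;p\big),\\
\theta\big(c_j(k_j)c_i(t)^\pm;p\big)=\theta\big(x_ix_jq^{m(k_j+t)};p\big)\,\theta\big(x_jq^{m(k_j-t)}/x_i;p\big).
\end{gather*}
Products of the first kind collapse to factors $(ax_j,x_j/a;q,p)_{\ast}$ in base $q$, while products of the second kind collapse to factors $(x_ix_j,x_i/x_j;q^m,p)_{\ast}$ in base $q^m$. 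Throughout I would use the inversion formula~\eqref{ti} to bring every shifted factorial into standard form, and keep careful track of the resulting powers of $q$, $x_i$ and of the signs $(-1)^{\ast}$; essentially all of these cancel against analogous contributions coming from the normalizing factors $f_{\bn\mathbf 0}$ and $g_{\bk\mathbf 0}$.

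The remaining ingredients are the $\Delta$ factor, the argument-shifted Jacobi-type theta factors $\theta(Xx_iq^{m(|\bk|+k_i)};p)/\theta(Xx_i;p)$ and $\theta(x_ix_jq^{m(k_i+k_j)};p)/\theta(x_ix_j;p)$, and the explicit power $q^{m|\bk|}$. The second and third arise cleanly from the boundary $t=k_i$ or $t=k_i+k_j$ terms when the double product over $t$ is written with adjusted ranges, together with the prefactor $\prod_{1\le i<j\le r}\theta(c_j(k_j)c_i(k_i)^\pm;p)^{-1}\prod_j c_j(k_j)^j$ present in $g_{\bk\bl}$. The $\Delta$ ratio and the $q^{m|\bk|}$ weight come from combining the antisymmetric product $\prod_{i<j}\theta(x_jq^{mk_j}/(x_iq^{mk_i});p)$ with the monomial factor $\prod_j c_j(k_j)^j/c_j(0)^j=\prod_j q^{mjk_j}$ and then applying~\eqref{ti} to reorder the arguments of each theta. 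The main obstacle is therefore purely one of organized computation: keeping the signs and monomial powers aligned across the many reorderings and cancellations, especially where the diagonal $i=j$ terms in the double products enter and need to be treated separately. Once these match, the formulas for $F_{\bn\bk}$ and $G_{\bk\bl}$ come out exactly as stated.
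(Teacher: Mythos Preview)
Your plan is exactly the paper's: specialize Theorem~\ref{bcmit} at $a(t)=aq^t$, $c_j(k)=x_jq^{mk}$, pass to the renormalized pair $F_{\bn\bk}=g_{\bk\mathbf 0}f_{\bn\bk}/f_{\bn\mathbf 0}$, $G_{\bk\bl}=f_{\bl\mathbf 0}g_{\bk\bl}/g_{\bk\mathbf 0}$, and collapse the theta products into elliptic shifted factorials via~\eqref{ti}. One small slip in your bookkeeping sketch: the factor $\theta(Xx_iq^{m(|\bk|+k_i)};p)/\theta(Xx_i;p)$ that you list among the ``remaining ingredients'' belongs to the $A_r$ corollaries~\ref{amipc}--\ref{aminc}, not to the $BC_r$ case; in Corollary~\ref{bcmic} no product $X=x_1\cdots x_r$ appears, and the very-well-poised theta factors are instead $\prod_{i\le j}\theta(x_ix_jq^{m(k_i+k_j)};p)/\theta(x_ix_j;p)$ in $F$ and $\prod_i\theta(ax_iq^{|\bl|+ml_i},aq^{|\bl|-ml_i}/x_i;p)/\theta(ax_i,a/x_i;p)$ in $G$.
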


Since
Theorem~\ref{cmit} lacks the parameters corresponding to $a(t)$ in the other two inversions, we~cannot give a specialization at the same level of generality. If we let
 $c_j(k)=x_jq^{k}$ ($1\leq j\leq r$) we~obtain an inversion equivalent to the case $m=1$ of Corollary~\ref{bcmic}, but with $F$ and $G$ interchanged.

One can check that the explicit matrix inversions obtained by Bhatnagar and the second author \cite[Corollaries~4.5, 5.8 and~9.6]{bs} are equivalent to the case $m=1$ of Corollaries~\ref{amipc}, \ref{aminc} and~\ref{bcmic}, respectively.

\section{Applications to elliptic hypergeometric series}\label{hss}

\subsection{Overview}

If $F$ and $G$ are mutually inverse lower-triangular matrices, and there is a known summation formula
\begin{gather}\label{fab}
\sum_{\bf 0\leq \bk \leq \bn}F_{\bf n\bf k}a_{\bk}=b_{\bn},
\end{gather}
then one can immediately deduce the inverse summation
\begin{gather}\label{gba}
\sum_{\bf 0\leq \bk \leq \bn}G_{\bf n\bf k}b_{\bk}=a_{\bn}.
\end{gather}
We will obtain several new results by applying this procedure to the multiple Jackson summations
\eqref{r87gl} and \eqref{cr87gl}.

In the case of the $A_r$ Jackson summation \eqref{r87gl} this can be done in four ways.
Namely, it~can be identified (in general, or in some special case) with \eqref{fab}, where $F_{\bf n\bf k}$ is as in Corollary~\ref{amipc} or as in Corollary~\ref{aminc}, in both cases with either $m=1$ or $m=2$. In the case $m=1$ of
 Corollary~\ref{aminc}, the identities \eqref{fab} and \eqref{gba} are equivalent. In the other three cases \eqref{gba} gives new summations, see Section~\ref{jss}.

The $C_r$ Jackson summation \eqref{cr87gl} can be expressed as \eqref{fab}, where
 $F_{\bf n\bf k}$ is as in Corollary~\ref{bcmic}, with $1\leq m\leq 4$.
 The sums \eqref{gba} are then of a form traditionally associated with the root system $D_r$ (the motivation for this terminology is weak, but we find it convenient and will use it). The~case $m=1$ gives a new proof of the elliptic $D_r$ Jackson
summation due to the first author \cite[Corollary~6.3]{r} (which for $p=0$ is due independently
to Bhatnagar \cite{b2} and
 the second author \cite{s}).
 This is parallel to the proof in \cite{s}, and we do not provide the details.
 The~remaining three cases lead to new summations, see Section~\ref{dss}.

\subsection[New Ar summations]{New $\boldsymbol{A_r}$ summations}\label{jss}

If we make the change of variables $(a,c)\mapsto (X,aXq^{|\bn}|)$ in \eqref{r87gl}, it can be expressed as \eqref{fab}, where $F_{\bn\bk}$ is as in the case $m=1$ of Corollary~\ref{amipc} and
\begin{gather*}
a_{\bk}=\frac{(aXq,b;q,p)_{|\bk|}}{(Xq/d,abd;q,p)_{\bk}}
\prod\limits_{i=1}^r\frac{(dx_i,Xx_iq/abd;q,p)_{k_i}}{(x_i/a,Xx_iq/b;q,p)_{k_i}},
\\
b_{\bk}=\frac{(Xq/bd,ad;q,p)_{|\bk|}}{(Xq/d,abd;q,p)_{|\bk|}}
\prod\limits_{i=1}^r\frac{\big(Xx_iq,abq^{|\bk|-k_i}/x_i;q,p\big)_{k_i}}
{\big(Xx_iq/b,aq^{|\bk|-k_i}/x_i;q,p\big)_{k_i}}.
\end{gather*}
 In the inverse identity \eqref{gba}, we make the change of variables
$x_i\mapsto tx_i$, $i=1,\dots,r$, $a\mapsto bcdt/aq$, $b\mapsto aq/bc$ and $d\mapsto aq/bdt$, where $t$ is chosen so that after these substitutions
$t^{r+1}={a^2q}/{bcdX}$.
After simplification,
 we can eliminate $t$ and arrive at the following result.

\begin{Theorem}[a new $A_r$ Jackson summation]\label{ijst}
We have the summation formula
\begin{gather}
\underset{i=1,\dots,r}{\sum_{0\le k_i\le n_i}}
\frac{\Delta\big(\mathbf x q^{\mathbf k};p\big)}{\Delta(\mathbf x;p)}
\frac{\ta\big(aq^{2|\bk|};p\big)}{\ta(a;p)}
\frac{(a,b,c;q,p)_{|\bk|}}{\big(aq^{|\bn|+1},aq/b,aq/c;q,p\big)_{|\bk|}}
q^{|\bk|}
\prod\limits_{i,j=1}^r\frac{\big(q^{-n_j}x_i/x_j;q,p\big)_{k_i}}{(qx_i/x_j;q,p)_{k_i}}\nonumber
\\ \phantom{\underset{i=1,\dots,r}{\sum_{0\le k_i\le n_i}}=}{}
\times
\prod\limits_{i=1}^r\frac{(bcd/ax_i;q,p)_{|\bk|-k_i} (d/x_i;q,p)_{|\bk|}
\big(a^2x_iq^{|\bn|+1}/bcd;q,p\big)_{k_i}}
{(d/x_i;q,p)_{|\bk|-k_i} (bcdq^{-n_i}/ax_i;q,p)_{|\bk|} (ax_iq/d;q,p)_{k_i}}\nonumber
\\ \phantom{\underset{i=1,\dots,r}{\sum_{0\le k_i\le n_i}}}{}
=\frac{(aq,aq/bc;q,p)_{|\bn|}}{(aq/b,aq/c;q,p)_{|\bn|}}
\prod_{i=1}^r\frac{(ax_iq/bd,ax_iq/cd;q,p)_{n_i}}{(ax_iq/d,ax_iq/bcd;q,p)_{n_i}}.\label{n87gl}
\end{gather}
\end{Theorem}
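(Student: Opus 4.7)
The plan is to follow the recipe outlined at the start of Section~\ref{hss}: recast the known $A_r$ elliptic Jackson summation \eqref{r87gl} as an instance of \eqref{fab}, apply the matrix inversion supplied by Corollary~\ref{amipc} at $m=1$ to obtain the inverse relation \eqref{gba}, and finally reparameterize so that the result takes the form~\eqref{n87gl}.

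First I would perform the change of variables $(a,c)\mapsto(X,aXq^{|\bn|})$ in \eqref{r87gl}, with $X=x_1\dotsm x_r$. The objective is to rewrite the left-hand side as $\sum_{\bk}F_{\bn\bk}a_{\bk}$, where $F_{\bn\bk}$ is exactly the matrix in Corollary~\ref{amipc} at $m=1$ and $a_{\bk}$, $b_{\bn}$ are the explicit sequences displayed in Section~\ref{jss}. This is a matching exercise: the factors $\Delta(\bx q^{\bk};p)/\Delta(\bx;p)$ and $(qx_i/x_j;q,p)_{k_i}^{-1}$ come directly from $F_{\bn\bk}$, while the very-well-poised theta factor and the remaining elliptic shifted factorials split naturally between $F_{\bn\bk}$ and $a_{\bk}$. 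The balancing condition $a^2q^{|\bn|+1}=bcde$ in \eqref{r87gl} is what ensures that such a split with only the stated parameters is possible.

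Once the identification \eqref{fab} has been made, the matrix inversion step is immediate: since $F$ and $G$ in Corollary~\ref{amipc} are mutually inverse lower-triangular matrices, \eqref{fab} implies \eqref{gba}, namely $\sum_{\bk}G_{\bn\bk}b_{\bk}=a_{\bn}$, with the explicit $G_{\bn\bk}$ from Corollary~\ref{amipc}. This already yields a summation identity, but in variables that do not yet resemble~\eqref{n87gl}.

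The final step is the simultaneous substitution $x_i\mapsto tx_i$ ($i=1,\dots,r$), $a\mapsto bcdt/aq$, $b\mapsto aq/bc$, $d\mapsto aq/bdt$, with $t$ chosen so that $t^{r+1}=a^2q/bcdX$ after substitution. The role of~$t$ is precisely to absorb the product $X$ that pervades Corollary~\ref{amipc}; since every ratio $x_i/x_j$ is invariant under $x_i\mapsto tx_i$, the variable $t$ can only survive in pure powers, which are then eliminated using its defining relation. The main obstacle will be the bookkeeping in this step: one must verify that each surviving power of $t$ is indeed a multiple of $r+1$, and one must recognize the $(d/x_i;q,p)_{|\bk|-k_i}$ and $(bcd/ax_i;q,p)_{|\bk|-k_i}$ factors in~\eqref{n87gl} as the outcome of splitting products of the form $(A;q,p)_{|\bk|}\prod_i(B_i;q,p)_{k_i}$ in~$G_{\bn\bk}b_{\bk}$ via the elementary identity $(a;q,p)_n=(a;q,p)_k(aq^k;q,p)_{n-k}$. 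Once the substitution is written out explicitly, the remaining manipulations are purely algebraic.
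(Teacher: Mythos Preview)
Your proposal is correct and follows essentially the same route as the paper: the same substitution $(a,c)\mapsto(X,aXq^{|\bn|})$ in~\eqref{r87gl}, the same identification with the $m=1$ case of Corollary~\ref{amipc}, and the same final reparameterization $x_i\mapsto tx_i$, $a\mapsto bcdt/aq$, $b\mapsto aq/bc$, $d\mapsto aq/bdt$ with $t^{r+1}=a^2q/bcdX$. Your remarks about how the powers of $t$ cancel and how the shifted factorials of length $|\bk|-k_i$ arise are accurate and match the bookkeeping the paper leaves implicit.
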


Theorem~\ref{ijst} has a companion identity, where the summation is supported on a simplex.
\begin{Corollary}\label{ijsc} For parameters subject to the relation
$a^2q^{N+1}=bc_1\dotsm c_{r+1}dx_1\dotsm x_r$,
we have the summation formula
\begin{gather}
\underset{0\le|\bk|\le N}{\sum_{k_1,\dots,k_r\ge 0}}
\frac{\Delta(\mathbf x q^{\mathbf k};p)}{\Delta(\mathbf x;p)}
\frac{\ta\big(aq^{2|\bk|};p\big)}{\ta(a;p)}
\frac{\big(a,b,q^{-N};q,p\big)_{|\bk|}}{\big(aq/b,aq^{N+1};q,p\big)_{|\bk|}} q^{|\bk|}\nonumber
\\ \phantom{\underset{0\le|\bk|\le N}{\sum_{k_1,\dots,k_r\ge 0}}=}{}
\times
\prod\limits_{i=1}^r\frac{(aq/CXx_i;q,p)_{|\bk|-k_i} (d/x_i;q,p)_{|\bk|}
\prod\limits_{j=1}^{r+1}(c_jx_i;q,p)_{k_i}}
{(d/x_i;q,p)_{|\bk|-k_i} (ax_iq/d;q,p)_{k_i}\prod\limits_{j=1}^r(qx_i/x_j;q,p)_{k_i}}
\prod\limits_{i=1}^{r+1}\frac 1{(ac_iq/CX;q,p)_{|\mathbf k|}}\nonumber
\\ \phantom{\underset{0\le|\bk|\le N}{\sum_{k_1,\dots,k_r\ge 0}}}{}
=\frac{(aq;q,p)_{N}}{b^N(aq/b;q,p)_{N}}
\prod\limits_{i=1}^r\frac{(ax_iq/bd;q,p)_{N}}{(ax_iq/d;q,p)_{N}}
\prod\limits_{i=1}^{r+1}\frac{(aq/c_id;q,p)_{N}}{(aq/bc_id;q,p)_{N}},\label{n87cgl}
\end{gather}
where $C=c_1\cdots c_{r+1}$ and $X=x_1\dotsm x_r$.
\end{Corollary}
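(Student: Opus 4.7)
The plan is to derive Corollary~\ref{ijsc} by mirroring the proof of Theorem~\ref{ijst}: apply the matrix inversion of Corollary~\ref{amipc} with $m=1$ to the \emph{simplex} form of the elliptic $A_r$ Jackson summation rather than to the rectangular form~\eqref{r87gl}. The elliptic simplex $A_r$ Jackson summation is available in the first author's work~\cite{r} (and in the trigonometric case goes back to Milne); it is the natural companion of~\eqref{r87gl}, with the summation running over the simplex $\{\bk\in\mathbb{Z}_{\geq 0}^r:|\bk|\leq N\}$ and featuring $r+1$ independent parameters $c_1,\dots,c_{r+1}$ in place of the rectangular bounds $n_j$.

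First, I would make a change of variables in this simplex Jackson summation analogous to the substitution $(a,c)\mapsto(X,aXq^{|\bn|})$ used for Theorem~\ref{ijst}, so as to identify it with~\eqref{fab}, where $F_{\bn\bk}$ is the matrix from the $m=1$ case of Corollary~\ref{amipc}. The matrix inversion then immediately delivers the dual identity~\eqref{gba}. Second, I would undo this change of variables by substitutions parallel to those in the proof of Theorem~\ref{ijst}, namely $x_i\mapsto tx_i$ for $i=1,\dots,r$ together with reassignments of $a$, $b$, $d$, with $t$ chosen so that the eventual balancing reads $a^2q^{N+1}=bc_1\cdots c_{r+1}dX$. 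The resulting identity should be exactly~\eqref{n87cgl}.

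The main obstacle is to align the summand of the simplex Jackson summation, after the initial change of variables, with the entries $F_{\bn\bk}$ of Corollary~\ref{amipc}: every factor must match, including the $\theta$-ratios and the double products indexed by $i,j$. This is a bookkeeping task requiring care, but once done, the matrix inversion produces the dual summation mechanically. An alternative route, which avoids invoking the simplex Jackson summation, is to derive Corollary~\ref{ijsc} directly from Theorem~\ref{ijst} by applying it with $r$ replaced by $r+1$ and specializing $n_{r+1}=0$, $c=q^{-N}$, and $x_{r+1}$ to a value determined by the balancing, together with the relabelling $c_j=q^{-n_j}/x_j$ for $j=1,\dots,r+1$; this forces $k_{r+1}=0$, and upon using the identity $\theta(x_iq^{k_i}/x_{r+1})/\theta(x_i/x_{r+1})=(qx_i/x_{r+1};q,p)_{k_i}/(x_i/x_{r+1};q,p)_{k_i}$ to absorb the spurious contributions involving $x_{r+1}$, one should arrive at~\eqref{n87cgl}. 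Matching both sides under this substitution is itself a delicate verification.
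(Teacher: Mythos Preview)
Your primary route has a structural gap. The matrix inversion of Corollary~\ref{amipc} is a statement about sums over \emph{rectangles}: the entries $F_{\bn\bk}$ carry the factors $\prod_{i,j}(q^{-n_j}x_i/x_j;q,p)_{k_i}$, which is precisely what cuts the sum down to $0\le k_i\le n_i$. A simplex Jackson summation is indexed by a single integer $N$, not by a multi-index $\bn$, and cannot be written as $\sum_{\mathbf 0\le\bk\le\bn}F_{\bn\bk}a_{\bk}=b_{\bn}$ with the $F$ of Corollary~\ref{amipc}. So there is no way to feed it into the inversion and read off~\eqref{gba} as a simplex identity; the output of the inversion would again be a rectangular sum. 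This is not a bookkeeping issue but a mismatch of frameworks.

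Your alternative route is closer in spirit to what the paper does, but it is both over-complicated and, more importantly, incomplete. The detour through $r\mapsto r+1$ with $n_{r+1}=0$ is unnecessary (Theorem~\ref{ijst} has no balancing condition, so there is nothing for $x_{r+1}$ to be ``determined by''), and in any case what you recover is only the special case of Corollary~\ref{ijsc} where $c_j=q^{-n_j}/x_j$ for nonnegative integers $n_j$; the $c_j$ are then \emph{not} generic complex parameters. The paper makes exactly this observation---that $c_j=q^{-n_j}/x_j$ in Corollary~\ref{ijsc} matches $c=q^{-N}$ in Theorem~\ref{ijst}---but then supplies the step you omit: one uses the quasi-periodicity~\eqref{qpt} to extend validity to all $c_j$ with $c_jx_j\in p^{\mathbb Z}q^{\mathbb Z_{\le 0}}$, and since both sides of~\eqref{n87cgl} are meromorphic in each $c_j$, agreement on this Zariski-dense set forces agreement for generic $c_j$ by analytic continuation. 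Without this extension step you have only proved a discrete family of specializations, not the Corollary.
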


We sketch the standard argument for deducing
Corollary~\ref{ijsc} from Theorem~\ref{ijst}. One first observes that the case $c_j=q^{-n_j}/x_j$ (for $1\leq j\leq r$) of Corollary~\ref{ijsc} is equivalent to the case $c=q^{-N}$ of Theorem~\ref{ijst}. One then uses the quasi-periodicity \eqref{qpt} to deduce that Corollary~\ref{ijsc} holds whenever $c_jx_j\in p^{\mathbb Z}q^{\mathbb Z_{\leq 0}}$. The general case then follows by analytic continuation in the parameters $c_j$.

The case $p=0$ of Theorem~\ref{ijst} and Corollary~\ref{ijsc} are due to the second author \cite[Theorem~4.1, Corollary~4.2]{s4}. They
generalize multiple ${}_6\phi_5$ summations due to
Bhatnagar \cite{b}.
The general case of Theorem~\ref{ijst} has already been announced and applied in several publications.
In~\cite{r3}, the first author found that it appears in connection with Felder's $\mathrm{SU}(2)$ elliptic quantum group. This led to an explicit system of multivariable biorthogonal functions, which essentially have the summand in \eqref{n87gl} as their weight function. In~\cite[Theorem~5.2]{bs}, Bhatnagar and the second author derived a multiple Bailey transformation\footnote{This refers to a generalization of
Bailey's ${}_{10}W_9$-transformation and should not be confused with the notion of Bailey transform mentioned in the Introduction.} by combining \eqref{r87gl} and \eqref{n87gl}
(the~same result was independently obtained by the first author). Finally, in \cite[Theorem~4.1]{r4} the~first author found another multiple Bailey transformation by combining \eqref{n87gl}
with yet ano\-ther (Gustafson--Rakha-type) multiple Jackson summation.
In fact, we have found a third multiple Bailey transformation
by combining \eqref{n87gl} with itself, but we save that result for future work.

Next, we replace $q$ by $q^2$ in \eqref{r87gl} and then make
the substitutions
\begin{gather*}
(a,b,c)\mapsto \big(X,aXq^{|\bn|},aXq^{|\bn|+1}\big).
\end{gather*}
Using the identity $(a,aq;q^2,p)_k=(a;q,p)_{2k}$,
we find that
 \eqref{fab} holds, with $F_{\bn\bk}$ as in the case $m=2$ of Corollary~\ref{amipc} and
\begin{gather*}
a_{\bk}=\frac{(aXq;q,p)_{2|\bk|}}{\big(Xq^2/d,a^2dXq;q^2,p\big)_{|\bk|}}
\prod\limits_{i=1}^r\frac{\big(dx_i,x_iq/a^2d;q^2,p\big)_{k_i}}{(x_i/a;q,p)_{2k_i}},
\\
b_{\bk}=q^{-e_2(\bk)}\frac{(ad,q/ad;q,p)_{|\bk|}}{\big(Xq^2/d,a^2 d Xq;q^2,p\big)_{|\bk|}}
\prod\limits_{i=1}^r\frac{\big(X x_iq^2,a^2 Xq^{2|\bk|-2k_i+1}/x_i;q^2,p\big)_{k_i}}{\big(aq^{|\bk|-k_i}/x_i,x_iq^{-|\bk|+k_i+1}/a;q,p\big)_{k_i}},
\end{gather*}
where $e_2(\mathbf k)=\sum_{1\leq i<j\leq r}k_ik_j$.
 In the inverse identity~\eqref{gba}, we make the change of variables $x_i\mapsto tx_i$, $i=1,\dots,r$,
$a\mapsto dt/aq$ and $d\mapsto abq/dt$, where $t$ is chosen so that after these substitutions
$t^{r+1}=a^2q/dX$. After simplification, this gives the following result.

\begin{Theorem}\label{aqist}
If $a^2q^{2|\bn|+1}=cd$, we have the quadratic summation formula
\begin{gather*}
\sum_{\substack{0\leq k_i\leq n_i,\\i=1,\dots,r}}
\frac{\Delta\big(\bx q^{2\bk};p\big)}{\Delta(\bx;p)}
\frac{\theta\big(aq^{3|\bk|};p\big)}{\theta(a;p)}
\frac{(a,b,q/b;q,p)_{|\bk|}}{\big(aq^{2|\bn|+1};q,p\big)_{|\bk|}\big(aq^2/b,aqb;q^2,p\big)_{|\bk|}} q^{|\bk|-e_2(\bk)}
\\ \phantom{\sum_{\substack{0\leq k_i\leq n_i,\\i=1,\dots,r}}}{}
\times
\prod\limits_{i=1}^r\frac{\big(cx_i;q^2,p\big)_{k_i}\big(d/x_i;q^2,p\big)_{|\bk|}(d/ax_i;q,p)_{|\bk|-k_i}}
{\big(aq^{2|\bn|-2n_i+1}/cx_i;q,p\big)_{|\bk|}\big(d/x_i;q^2,p\big)_{|\bk|-k_i}\big(ax_iq^{k_i-|\bk|+1}/d;q\big)_{k_i}}
\\ \phantom{\sum_{\substack{0\leq k_i\leq n_i,\\i=1,\dots,r}}}{}
\times\prod\limits_{i,j=1}^r\frac{\big(q^{-2n_j}x_i/x_j;q^2,p\big)_{k_i}}{\big(q^2x_i/x_j;q^2,p\big)_{k_i}}
=\frac{(aq;q,p)_{2|\bn|}}{\big(abq,aq^2/b;q^2,p\big)_{|\bn|}}\prod\limits_{i=1}^r
\frac{\big(abx_iq/d,ax_iq^2/bd;q^2,p\big)_{n_i}}{(ax_iq/d;q,p)_{2n_i}}.
\end{gather*}
\end{Theorem}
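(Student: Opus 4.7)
The plan is to apply the matrix inversion framework \eqref{fab}--\eqref{gba} to the $A_r$ elliptic Jackson summation \eqref{r87gl}, using the case $m=2$ of Corollary~\ref{amipc}. This is exactly the strategy sketched just before the statement of the theorem, and the main work is to carry out the identification of sum sides and then invert.

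First I would replace $q$ by $q^2$ throughout \eqref{r87gl} and then perform the substitution $(a,b,c)\mapsto(X,aXq^{|\bn|},aXq^{|\bn|+1})$. The key combinatorial identity $(a,aq;q^2,p)_k=(a;q,p)_{2k}$ lets the factors involving $b$ and $c$ on the summation side collapse into ordinary (base $q$) shifted factorials of length $2k_i$ or $2|\bk|$; this is exactly what makes the kernel match the $F_{\bn\bk}$ from Corollary~\ref{amipc} with $m=2$. A careful matching of each product then identifies the relevant $a_\bk$ and $b_\bk$ (these are precisely the expressions displayed in the paragraph preceding the theorem, with the quadratic factor $q^{-e_2(\bk)}$ arising from the cross terms $q^{m|\bk|}=q^{2|\bk|}$ in $F_{\bn\bk}$ after pulling out the normalization $f_{\bn\mathbf 0}$).

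Once \eqref{fab} is established, the inverse sum \eqref{gba} reads as a sum over $\bl$ of $G_{\bn\bl}b_\bl$ equal to $a_\bn$, where $G$ is the second matrix in Corollary~\ref{amipc} with $m=2$. Next I would apply the change of variables $x_i\mapsto tx_i$ ($i=1,\dots,r$), $a\mapsto dt/aq$, $d\mapsto abq/dt$, and choose $t$ so that $t^{r+1}=a^2q/(dX)$. The role of this scaling is to restore the balancing condition in a symmetric form and force all dependence on $t$ into factors homogeneous of degree zero, so that after using the quasi-periodicity \eqref{qpt} to handle any residual $t$-powers the auxiliary parameter $t$ disappears entirely from both sides. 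Relabelling $\bn\leftrightarrow\bk$ and rewriting the factorials in their natural form yields the claimed identity.

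The main obstacle I expect is bookkeeping, not conceptual: keeping track of how the quadratic exponent $e_2(\bk)$, the shifts by $|\bk|-k_i$ (rather than $|\bk|$), and the mixed bases $q$ versus $q^2$ all arise from the interplay between the $m=2$ specialization of $G_{\bk\bl}$ and the $b_\bk$ obtained from the doubled Jackson sum. In particular one must verify that, after the $t$-substitution, the product $\prod_{i=1}^r$ of single-index factors and the $\prod_{i,j=1}^r$ of $A_r$-type cross factors recombine into the expressions on the left-hand side of the theorem, and that the product over $i$ on the right-hand side telescopes correctly via $(aq;q,p)_{2|\bn|}=(aq,aq^2;q^2,p)_{|\bn|}$ and analogous doublings. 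Once this is checked the theorem follows directly.
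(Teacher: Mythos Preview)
Your proposal is correct and follows exactly the approach the paper uses: replace $q\mapsto q^2$ in \eqref{r87gl}, substitute $(a,b,c)\mapsto(X,aXq^{|\bn|},aXq^{|\bn|+1})$, identify the result as \eqref{fab} with the $m=2$ kernel of Corollary~\ref{amipc}, invert via \eqref{gba}, and then apply the change of variables $x_i\mapsto tx_i$, $a\mapsto dt/aq$, $d\mapsto abq/dt$ with $t^{r+1}=a^2q/(dX)$. The only small imprecision is your remark that the factor $q^{-e_2(\bk)}$ comes from $F_{\bn\bk}$; in fact it sits in $b_\bk$ (it arises from rewriting the $|\bk|$-dependent factorials on the right-hand side of the specialized Jackson sum), but this does not affect the argument.
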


By a standard argument (see the discussion of Corollary~\ref{ijsc} above) we deduce
 the following companion identity.

\begin{Corollary}\label{aqisc}
If
$a^2q=b_1\dotsm b_{r+1}cx_1\dotsm x_r$,
we have the quadratic summation formula
\begin{gather*}
\sum_{\substack{k_1,\dots,k_r\geq 0,\\|\bk|\leq N}}
\frac{\Delta\big(\bx q^{2\bk};p\big)}{\Delta(\bx;p)}\frac{\theta\big(aq^{3|\bk|};p\big)}{\theta(a;p)}
\frac{\big(a,q^{-N},q^{N+1};q,p\big)_{|\bk|}}{\big(aq^{N+2},aq^{1-N};q^2,p\big)_{|\bk|}
\prod\limits_{i=1}^{r+1}(ab_iq/BX;q,p)_{|\bk|}} q^{|\bk|-e_2(\bk)}
\\ \phantom{\sum_{\substack{k_1,\dots,k_r\geq 0,\\|\bk|\leq N}}}{}
\times\prod\limits_{i=1}^r\frac{\big(c/x_i;q^2,p\big)_{|\bk|}(c/ax_i;q,p)_{|\bk|-k_i}
\prod\limits_{j=1}^{r+1}\big(b_jx_i;q^2,p\big)_{k_i}}{\big(c/x_i;q^2,p\big)_{|\bk|-k_i}
\big(ax_iq^{k_i-|\bk|+1}/c;q,p\big)_{k_i}\prod\limits_{j=1}^r\big(q^2x_i/x_j;q^2,p\big)_{k_i}}
\\ \phantom{\sum_{\substack{k_1,\dots,k_r\geq 0,\\|\bk|\leq N}}}{}
=\begin{cases}\displaystyle\frac{\big(aq^2;q^2,p\big)_m}{\big(q/a;q^2,p\big)_m}
\prod\limits_{i=1}^r\frac{\big(cq/ax_i;q^2,p\big)_m}{\big(ax_iq^2/c;q^2,p\big)_m}
\prod\limits_{i=1}^{r+1}\frac{\big(aq^{2}/b_ic;q^2,p\big)_m}{\big(b_icq/a;q^2,p\big)_m}, &N=2m,
\\[2.5ex]
\displaystyle\frac{\big(aq;q^2,p\big)_m}{\big(1/a;q^2,p\big)_m}\prod\limits_{i=1}^r
\frac{\big(c/ax_i;q^2,p\big)_m}{\big(ax_iq/c;q^2,p\big)_m}\prod\limits_{i=1}^{r+1}
\frac{\big(aq/b_ic;q^2,p\big)_m}{\big(b_ic/a;q^2,p\big)_m}, &N=2m-1,
\end{cases}
\end{gather*}
where $B=b_1\dotsm b_{r+1}$ and $X=x_1\dotsm x_r$.
\end{Corollary}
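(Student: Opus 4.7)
The plan is to follow the standard analytic-continuation argument sketched after Corollary~\ref{ijsc}, reducing Corollary~\ref{aqisc} to the specialization $b=q^{-N}$ of Theorem~\ref{aqist}. Concretely, I would first specialize $b_j=q^{-2n_j}/x_j$ for $j=1,\dots,r$ in Corollary~\ref{aqisc}, where $n_1,\dots,n_r$ are non-negative integers. The factor $(b_jx_j;q^2,p)_{k_j}=(q^{-2n_j};q^2,p)_{k_j}$ then vanishes for $k_j>n_j$, so the sum is effectively supported on the rectangle $0\leq k_j\leq n_j$. The balancing condition $a^2q=b_1\dotsm b_{r+1}cX$ reduces to $a^2q^{2|\bn|+1}=b_{r+1}c$, matching Theorem~\ref{aqist}'s relation under the identification $(c_{\mathrm{Thm}},d_{\mathrm{Thm}})=(b_{r+1},c_{\mathrm{Cor}})$ together with $b_{\mathrm{Thm}}=q^{-N}$.

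The core of this step is a factor-by-factor match. The numerator product $\prod_{j=1}^{r+1}(b_jx_i;q^2,p)_{k_i}$ splits as $\prod_{j=1}^{r}(q^{-2n_j}x_i/x_j;q^2,p)_{k_i}\cdot(b_{r+1}x_i;q^2,p)_{k_i}$, supplying both the $A_r$-type termination factors and Theorem's numerator $(c_{\mathrm{Thm}}x_i;q^2,p)_{k_i}$. The denominator product $\prod_{i=1}^{r+1}(ab_iq/BX;q,p)_{|\bk|}$ yields $(aq^{2|\bn|+1};q,p)_{|\bk|}$ for $i=r+1$ (via the constraint) and $\prod_{i=1}^{r}(aq^{2|\bn|-2n_i+1}/b_{r+1}x_i;q,p)_{|\bk|}$ for $i\leq r$. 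And $(q^{-N},q^{N+1};q,p)_{|\bk|}(aq^{N+2},aq^{1-N};q^2,p)_{|\bk|}^{-1}$ coincides with Theorem's $(b,q/b;q,p)_{|\bk|}(aq^2/b,aqb;q^2,p)_{|\bk|}^{-1}$ at $b=q^{-N}$. Invoking Theorem~\ref{aqist} then reduces the specialized LHS of Corollary~\ref{aqisc} to a closed form, and one verifies by a direct constant-level calculation, using \eqref{ti} and the balancing relation, that it equals the RHS of Corollary~\ref{aqisc} at the specialization, with the expected parity split between $N=2m$ and $N=2m-1$.

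Finally, I would extend from these discrete special values to the general case. After solving the balancing condition for $c_{\mathrm{Cor}}$, both sides of Corollary~\ref{aqisc} are meromorphic functions in each $b_j$. The identity has been verified on the infinite set $\{b_j=q^{-2n_j}/x_j:n_j\in\mathbb{Z}_{\geq 0}\}$; repeated application of the quasi-periodicity \eqref{qpt} to every theta-function factor on both sides extends it to $b_j\in p^{\mathbb{Z}}q^{-2\mathbb{Z}_{\geq 0}}/x_j$, which accumulates in $\mathbb{C}^{*}$ since $|p|<1$. The identity theorem for meromorphic functions then yields the general case. The main obstacle is computational rather than conceptual: the principal difficulty is the meticulous bookkeeping required for the factor-by-factor LHS match and the product-level RHS identity (with its parity cases), together with confirming that both sides share the same quasi-periodicity in each $b_j$ before invoking the extension argument.
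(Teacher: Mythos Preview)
Your approach is correct and coincides with the paper's: the paper proves Corollary~\ref{aqisc} by the ``standard argument'' outlined after Corollary~\ref{ijsc}, namely specializing $b_j=q^{-2n_j}/x_j$ to recover the case $b=q^{-N}$ of Theorem~\ref{aqist}, then extending via quasi-periodicity \eqref{qpt} and analytic continuation. Your write-up simply spells out the details of this sketch, with the correct adaptation to the quadratic base~$q^2$.
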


Theorem~\ref{aqist} and Corollary~\ref{aqisc} are new even in the case $p=0$. When $r=1$, they reduce to the case $a=b$ of \cite[Corollary~4.4]{w} and to \cite[Corollary~4.10]{w}, respectively.
The case when both $p=0$ and $r=1$ is due to Gessel and Stanton \cite{gs} and, independently, to Gasper and Rahman \cite{g,rh2}, respectively.

Finally, we replace $q$ by $q^2$ in \eqref{r87gl} and then make the substitutions
\begin{gather*}
(a,d,e)\mapsto \big(X,aXq^{|\bn|},aXq^{|\bn|+1}\big).
\end{gather*}
 The resulting identity takes the form
 \eqref{fab}, where $F_{\bn\bk}$ is as in the case $m=2$ of Corollary~\ref{aminc} and
\begin{gather*}
a_{\bk}=\frac{\big(b,c;q^2,p\big)_{|\bk|}}{(X/a;q,p)_{2|\bk|}}
\prod\limits_{i=1}^r\frac{(ax_iq;q,p)_{2k_i}}{\big(Xx_iq^2/b,Xx_iq^2/c;q^2,p\big)_{k_i}},
\\
b_{\bk}=\frac{(ab/X,Xq/ab;q,p)_{|\bk|}}{(a/X,Xq/a;q,p)_{|\bk|}}
\prod\limits_{i=1}^r\frac{\big(X x_iq^2,Xx_iq^2/bc;q^2,p\big)_{k_i}}{\big(Xx_iq^2/b,Xx_iq^2/c;q^2,p\big)_{k_i}}.
\end{gather*}
This holds when $a^2bc=X^2q$. In the inverse identity \eqref{gba}, we make the substitutions
$x_i\mapsto tx_i$, $1\leq i\leq r$, $a\mapsto a/t$, $b\mapsto aqb/d$, $c\mapsto aq^2/bd$,
where $t$ is chosen so that after these substitutions $t^{r+1}=q	a^2/cX$. This gives the following quadratic summation formula.

\begin{Theorem}\label{aqst}
If $a^2q^{2|\bn|+1}=cd$, then
\begin{gather*}
\sum_{\substack{0\leq k_i\leq n_i,\\ i=1,\dots,r}}
\frac{\Delta\big(\bx q^{2\bk};p\big)}{\Delta(\bx;p)}
\frac{(b,q/b;q,p)_{|\bk|}}{(aq/c,aq/d;q,p)_{|\bk|}} q^{|\bk|}\prod\limits_{i,j=1}^r
\frac{\big(q^{-2n_j}x_i/x_j;q^2,p\big)_{k_i}}{\big(q^2x_i/x_j;q^2,p\big)_{k_i}}
\\ \phantom{\sum_{\substack{0\leq k_i\leq n_i,\\ i=1,\dots,r}}=}{}
\times\prod\limits_{i=1}^r\bigg(\frac{\theta\big(ax_iq^{|\bk|+2k_i};p\big)}{\theta(ax_i;p)}
\frac{(ax_i;q,p)_{|\bk|}}{\big(ax_i q^{2n_i+1};q,p\big)_{|\bk|}}\frac{\big(cx_i,dx_i;q^2,p\big)_{k_i}}
{\big(ax_iq^2/b,abx_iq;q^2,p\big)_{k_i}}\bigg)
\\ \phantom{\sum_{\substack{0\leq k_i\leq n_i,\\ i=1,\dots,r}}}{}
=\frac{\big(aq^2/bc,abq/c;q^2,p\big)_{|\bn|}}{(aq/c;q,p)_{2|\bn|}}\prod\limits_{i=1}^r\frac{(ax_iq;q,p)_{2n_i}} {\big(ax_iq^2/b,abx_iq;q^2,p\big)_{n_i}}.
\end{gather*}
\end{Theorem}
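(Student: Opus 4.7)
The strategy is to apply the matrix inversion framework of Section~\ref{smis} to the $A_r$ Jackson summation \eqref{r87gl}, following the same template that produced Theorem~\ref{ijst} and Theorem~\ref{aqist}. As explicitly laid out in the paragraph preceding the theorem, I would first replace $q$ by $q^2$ in \eqref{r87gl} and perform the reparametrisation $(a,d,e)\mapsto (X, aXq^{|\bn|}, aXq^{|\bn|+1})$. The remaining free parameters are then $a,b,c$ together with $x_1,\dots,x_r$, constrained by the balancing condition inherited from \eqref{r87gl}. The key identification is that, after this reparametrisation, the kernel of the Jackson sum matches exactly (up to a $\bk$-independent normalisation) the matrix $F_{\bn\bk}$ from Corollary~\ref{aminc} with $m=2$, and the remaining $\bk$-dependent and $\bn$-dependent pieces coincide with the sequences $a_\bk$ and $b_\bk$ displayed in the excerpt. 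Verifying this identification amounts to rewriting products such as $(u;q^2,p)_k(uq;q^2,p)_k = (u;q,p)_{2k}$ to match the shifted factorials on both sides.

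Once the Jackson summation is cast in the form $\sum_{\bf 0\leq\bk\leq\bn}F_{\bn\bk}a_\bk=b_\bn$, the matrix inversion Corollary~\ref{aminc} (with $m=2$) gives the dual identity $\sum_{\bf 0\leq\bk\leq\bn}G_{\bn\bk}b_\bk=a_\bn$ for free. To bring this dual identity into the shape of Theorem~\ref{aqst}, I would then perform the change of variables $x_i\mapsto t x_i$, $a\mapsto a/t$, $b\mapsto aqb/d$, $c\mapsto aq^2/bd$, where the auxiliary scale $t$ is fixed by $t^{r+1}=qa^2/cX$. The choice of $t$ is rigged precisely so that the total weight of $t$ in every surviving factor (coming from $\Delta(tx;p)$, the products $\prod_{i,j}\theta(tx_i/tx_j;p)=\prod_{i,j}\theta(x_i/x_j;p)$, and the various theta and elliptic shifted factorial terms) vanishes, allowing $t$ to be eliminated after applying the balancing. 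What remains is the explicit quadratic summation as stated, with the single constraint $a^2q^{2|\bn|+1}=cd$ inherited from the Jackson balancing condition $a^2q^{|\bn|+1}=bcde$ after tracing the substitutions.

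The main obstacle is bookkeeping: one must carefully track every elliptic shifted factorial through the quadratic base change $q\mapsto q^2$, the reparametrisation in terms of $X=x_1\cdots x_r$, and then the $t$-scaling, simplifying repeatedly using $(u;q^2,p)_k(uq;q^2,p)_k=(u;q,p)_{2k}$, the quasi-periodicity \eqref{qpt}, and the inversion formula \eqref{ti}. A subsidiary check is that the asymmetric-looking factors involving $|\bk|-k_i$ (such as $(d/ax_i;q,p)_{|\bk|-k_i}$) emerge correctly; these originate from the $i=j$ contributions to products $\prod_{i,j}$ in the matrix $G_{\bn\bk}$ and are a typical signature of inverting Corollary~\ref{aminc}-type kernels. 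No genuinely new idea is required beyond what was already used for Theorems~\ref{ijst} and~\ref{aqist}; the difficulty lies purely in the elliptic-algebraic manipulation.
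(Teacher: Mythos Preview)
Your proposal is correct and follows essentially the same route as the paper: replace $q\mapsto q^2$ in \eqref{r87gl}, substitute $(a,d,e)\mapsto(X,aXq^{|\bn|},aXq^{|\bn|+1})$ to match the $m=2$ case of Corollary~\ref{aminc}, invert, and then rescale by $x_i\mapsto tx_i$, $a\mapsto a/t$, $b\mapsto aqb/d$, $c\mapsto aq^2/bd$ with $t^{r+1}=qa^2/cX$. One small slip: the asymmetric factors of the shape $(d/ax_i;q,p)_{|\bk|-k_i}$ that you anticipate do \emph{not} occur in this theorem (they are a feature of Theorem~\ref{aqist}, which uses Corollary~\ref{amipc} rather than Corollary~\ref{aminc}); the $G$-matrix of Corollary~\ref{aminc} produces only factors indexed by $|\bk|$ or by $k_i$, so your bookkeeping will be slightly simpler than you expect.
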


 Theorem~\ref{aqst} has the following companion identity.

\begin{Corollary}\label{aqsc}
If $a^2q=b_1\dotsm b_{r+2}x_1\dotsm x_r$, then
\begin{gather*}
\sum_{\substack{k_1,\dots,k_r\geq 0,\\ |\bk|\leq N}}
\frac{\Delta\big(\bx q^{2\bk};p\big)}{\Delta(\bx;p)}
\frac{\big(q^{-N},q^{N+1};q,p\big)_{|\bk|}}{\prod\limits_{i=1}^{r+2}(aq/b_i;q,p)_{|\bk|}} q^{|\bk|}
\\ \phantom{\sum_{\substack{k_1,\dots,k_r\geq 0,\\ |\bk|\leq N}}=}{}
\times\prod\limits_{i=1}^r
\raisebox{-1pt}{$\left(\rule{0pt}{26pt}\right.$}
\frac{\theta\big(ax_iq^{|\bk|+2k_i};p\big)}{\theta(ax_i;p)}\frac{(ax_i;q,p)_{|\bk|}
\prod\limits_{j=1}^{r+2}\big(x_ib_j;q^2,p\big)_{k_i}}{\big(ax_iq^{N+2},ax_iq^{1-N};q^2,p\big)_{k_i}
\prod\limits_{j=1}^r\big(q^2x_i/x_j;q^2,p\big)_{k_i}}
\raisebox{-1pt}{$\left.\rule{0pt}{26pt}\right)$}
\\ \phantom{\sum_{\substack{k_1,\dots,k_r\geq 0,\\ |\bk|\leq N}}}{}
=\begin{cases}\displaystyle\prod\limits_{i=1}^r\frac{\big(ax_iq^2;q^2,p\big)_m}{\big(q/ax_i;q^2,p\big)_m}
\prod\limits_{i=1}^{r+2}\frac{\big(b_iq/a;q^2,p\big)_m}{\big(aq^2/b_i;q^2,p\big)_m}, & N=2m,
\\[2.5ex]
\displaystyle\prod\limits_{i=1}^r\frac{\big(ax_iq;q^2,p\big)_m}{\big(1/ax_i;q^2,p\big)_m}
\prod\limits_{i=1}^{r+2}\frac{\big(b_i/a;q^2,p\big)_m}{\big(aq/b_i;q^2,p\big)_m}, & N=2m-1.
\end{cases}
\end{gather*}
\end{Corollary}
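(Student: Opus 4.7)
The plan is to deduce Corollary \ref{aqsc} from Theorem \ref{aqst} via the standard ``specialize, extend by quasi-periodicity, analytically continue'' argument sketched after Corollary \ref{ijsc}. First, I would fix $N$ and specialize $b_j = q^{-2n_j}/x_j$ for $j = 1, \dots, r$, where $(n_1, \dots, n_r) \in \mathbb Z_{\ge 0}^r$ satisfies $|\bn| \le N$. The diagonal factors $(x_j b_j; q^2, p)_{k_j} = (q^{-2n_j}; q^2, p)_{k_j}$ in the summand force $k_j \le n_j$, so the summation collapses to the hyper-rectangle $0 \le k_j \le n_j$, and the constraint $|\bk| \le N$ becomes automatic. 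Simultaneously, the balancing $a^2 q = b_1 \cdots b_{r+2} x_1 \cdots x_r$ becomes $b_{r+1} b_{r+2} = a^2 q^{2|\bn|+1}$, so identifying $(b_{r+1}, b_{r+2}) = (c, d)$ and $b = q^{-N}$ places us in the setting of Theorem \ref{aqst}.

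Next I would verify that the LHS of the specialized Corollary \ref{aqsc} matches the LHS of Theorem \ref{aqst} factor by factor. The key identifications are $(b, q/b; q, p)_{|\bk|} = (q^{-N}, q^{N+1}; q, p)_{|\bk|}$, $(ax_i q^2/b, ab x_i q; q^2, p)_{k_i} = (ax_i q^{N+2}, ax_i q^{1-N}; q^2, p)_{k_i}$, the splitting $\prod_{i=1}^{r+2} (aq/b_i; q, p)_{|\bk|} = (aq/c, aq/d; q, p)_{|\bk|} \prod_{i=1}^r (ax_i q^{2n_i+1}; q, p)_{|\bk|}$, and $\prod_{j=1}^{r+2}(x_i b_j; q^2, p)_{k_i} = (cx_i, dx_i; q^2, p)_{k_i} \prod_{j=1}^r (q^{-2n_j} x_i/x_j; q^2, p)_{k_i}$; all remaining summand factors coincide verbatim. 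Theorem \ref{aqst} then reduces matters to showing that its RHS (with $b = q^{-N}$, $c = b_{r+1}$, $d = b_{r+2}$) equals the specialization of the RHS of Corollary \ref{aqsc}. Using the balancing $cd = a^2 q^{2|\bn|+1}$, the decomposition $(a; q, p)_{2|\bn|} = (a, aq; q^2, p)_{|\bn|}$, the shift identity $(a; q^2, p)_{|\bn|}/(a q^{2n_i}; q^2, p)_{|\bn|} = (a; q^2, p)_{n_i}/(aq^{2|\bn|}; q^2, p)_{n_i}$, and the inversion \eqref{ti}, one rewrites the two expressions into a common form; the parity of $N$ dictates which branch of the piecewise RHS formula arises, with the sign factors and powers of $q$ produced by $\theta(1/x;p) = -x^{-1}\theta(x;p)$ cancelling between the ``$(c,d)$-part'' and the ``$\prod_i$-part''.

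This establishes Corollary \ref{aqsc} whenever $b_j x_j \in q^{-2 \mathbb Z_{\ge 0}}$ for $j = 1, \dots, r$. Applying the quasi-periodicity \eqref{qpt} to each $b_j$ in turn, both sides pick up explicit monomial prefactors which one verifies agree, extending the identity to $b_j x_j \in p^{\mathbb Z} q^{-2 \mathbb Z_{\ge 0}}$. This set has accumulation points (e.g., via $p^m$ with $m \to \infty$), and both sides are meromorphic in $b_1, \dots, b_r$ once the balancing is used to eliminate one parameter, so analytic continuation delivers Corollary \ref{aqsc} for all generic values. I expect the main obstacle to be the RHS verification: tracking the theta-inversions needed to convert the single-formula RHS of Theorem \ref{aqst} into the parity-dependent piecewise RHS of Corollary \ref{aqsc} is the delicate bookkeeping step, though it is entirely routine.
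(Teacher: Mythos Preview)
Your proposal is correct and follows exactly the route the paper intends: it is the ``standard argument'' sketched after Corollary~\ref{ijsc}, applied to Theorem~\ref{aqst} with $b=q^{-N}$ and $b_j=q^{-2n_j}/x_j$ for $1\le j\le r$, followed by quasi-periodicity and analytic continuation. One small remark: the restriction $|\mathbf n|\le N$ you impose at the outset is unnecessary, since the factor $(q^{-N};q,p)_{|\mathbf k|}$ in the specialized summand already kills all terms with $|\mathbf k|>N$; dropping it makes the subsequent claim ``whenever $b_jx_j\in q^{-2\mathbb Z_{\ge 0}}$'' immediate rather than something to be justified.
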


The $p=0$ cases of Theorem~\ref{aqst} and Corollary~\ref{aqsc} are due to the second author \cite[Theorems~5.4 and~5.8]{s}. In the case $r=1$, they are equivalent to Theorem~\ref{aqist} and hence again reduce to summations from \cite{w}.

\subsection[Dr summations]{$\boldsymbol{D_r}$ summations}\label{dss}

A $D_r$ quadratic summation formula is obtained if we replace
$q$ by $q^2$ in \eqref{cr87gl} and then make the substitutions
$(a,b,c,d,e)\mapsto \big(1,q/ab,b/a,aq^{|\bn|},aq^{|\bn|+1}\big)$.
The resulting identity takes the form~\eqref{fab}, where $F_{\bn\bk}$ is as in the case $m=2$ of Corollary~\ref{bcmic} and
\begin{gather*}
a_{\bk}=\prod\limits_{i=1}^r\frac{(ax_iq;q,p)_{2k_i}\big(x_iq/ab,bx_i/a;q^2,p\big)_{k_i}}
{(x_i/a;q,p)_{2k_i}\big(abx_iq,ax_iq^2/b;q^2,p\big)_{k_i}},
\\
b_{\bk}=\frac{\big(bq^{1-|\bk|},q^{2-|\bk|}/b,a^2q;q^2,p\big)_{|\bk|}
\prod\limits_{i, j=1}^r\big(x_ix_jq^2;q^2,p\big)_{k_i}}{\prod\limits_{i=1}^r
\big(abx_iq,ax_iq^2/b,x_iq^{2-|\bk|}/a,aq^{1+|\bk|-2k_i}/x_i;q^2,p\big)_{k_i}
\prod\limits_{1\le i<j\le r}\big(x_ix_jq^2;q^2,p\big)_{k_i+k_j}}.
\end{gather*}
The inverse identity \eqref{gba} is easily simplified as follows.

\begin{Theorem}\label{dqst}
We have the quadratic summation formula
\begin{gather*}
\sum_{\substack{0\leq k_i\leq n_i,\\ i=1,\dots,r}}
\frac{\Delta\big(\bx q^{2\bk};p\big)}{\Delta(\bx;p)}
q^{|\bk|-e_2(\bk)}
\prod\limits_{1\le i<j\le r}\frac 1{\big(x_ix_j;q^2,p\big) _{k_i+k_j}}
\\ \phantom{\sum_{\substack{0\leq k_i\leq n_i,\\ i=1,\dots,r}}=}
\times\prod\limits_{i=1}^{r}\bigg(\frac{\ta\big(ax_iq^{2k_i+|\bk|};p\big)}{\ta(ax_i;p)}
\frac {(ax_i;q,p)_{|\bk|} (aq/x_i;q,p)_{|\bk|-k_i}}
{\big(ax_iq^{2n_i+1},aq^{1-2n_i}/x_i;q,p\big)_{|\bk|}\big(q^{k_i-|\bk|}x_i/a;q,p\big)_{k_i}}\bigg)
\\ \phantom{\sum_{\substack{0\leq k_i\leq n_i,\\ i=1,\dots,r}}=}{}
\times\frac{\big(a^2q;q^2,p\big)_{|\bk|} (b,q/b;q,p)_{|\bk|}}
{\prod\limits_{i=1}^{r}\big(abx_iq,ax_iq^2/b;q^2,p\big)_{k_i}}\;
\prod\limits_{i,j=1} ^{r}\frac{\big(q^{-2n_j}x_i/x_j,x_ix_jq^{2n_j};q^2,p\big)_{k_i}}
{\big(q^2x_i/x_j;q^2,p\big)_{k_i}}
\\ \phantom{\sum_{\substack{0\leq k_i\leq n_i,\\ i=1,\dots,r}}}{}
=\prod\limits_{i=1} ^{r}\frac{(ax_iq;q,p)_{2n_i} \big(x_iq/ab,bx_i/a;q^2,p\big)_{n_i}}
{(x_i/a;q,p)_{2n_i}\big(abx_iq,ax_iq^2/b;q^2,p\big)_{n_i}}.
\end{gather*}
\end{Theorem}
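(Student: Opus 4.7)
The proof follows the general scheme set out immediately before the statement: apply the matrix inversion principle \eqref{fab}--\eqref{gba} to a suitable specialization of the $C_r$ elliptic Jackson summation \eqref{cr87gl}, using the $m=2$ case of Corollary~\ref{bcmic} as the inverting pair $(F,G)$.

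Concretely, I would first replace $q$ by $q^2$ in \eqref{cr87gl} and then substitute $(a,b,c,d,e)\mapsto(1,q/ab,b/a,aq^{|\bn|},aq^{|\bn|+1})$, as indicated in the paragraph preceding the theorem. The resulting identity should be rearranged into the form $\sum_{\mathbf 0\le\bk\le\bn} F_{\bn\bk}a_{\bk}=b_{\bn}$ by separating the $\bk$-only factors (absorbed into $a_{\bk}$) from the remaining $(\bn,\bk)$-factors (forming $F_{\bn\bk}$), and verifying that the latter agrees verbatim with the $m=2$ specialization of Corollary~\ref{bcmic}. This is a direct but bookkeeping-heavy calculation, essentially matching each $\theta$-block and each elliptic shifted factorial in the specialized \eqref{cr87gl} against the explicit formulas for $F_{\bn\bk}$, $a_{\bk}$ and $b_{\bk}$ displayed just before the theorem; one also checks that the balancing condition from \eqref{cr87gl} is automatically satisfied by the substitutions.

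With this identification in place, Corollary~\ref{bcmic} immediately yields the dual summation $a_{\bn}=\sum_{\mathbf 0\le\bk\le\bn} G_{\bn\bk}b_{\bk}$, and substituting the explicit formulas for $a_{\bn}$, $b_{\bk}$ and $G_{\bn\bk}$ (and cancelling common factors) should produce the stated identity. I expect the main obstacle to be the organizational rewriting of the awkward factors $(bq^{1-|\bk|},q^{2-|\bk|}/b;q^2,p)_{|\bk|}$ and $\bigl(a^2q;q^2,p\bigr)_{|\bk|}$ appearing in $b_{\bk}$: these need to be reshaped, via the quasi-periodicity \eqref{qpt} and the standard reflection identity $(aq^{-n};q,p)_n=(-a)^nq^{-\binom{n+1}{2}}(q/a;q,p)_n$ applied at base $q^2$, until they combine with the $\Delta$-ratio and the cross-factors $(x_ix_jq^{2n_j};q^2,p)_{k_i}$ from $G_{\bn\bk}$ to produce the $(b,q/b;q,p)_{|\bk|}$ factor, the $(aq/x_i;q,p)_{|\bk|-k_i}$ factor, and the antisymmetric denominators on the left-hand side. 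No new idea should be required beyond the technique already employed for the analogous $A_r$ results (Theorems~\ref{aqist} and~\ref{aqst}); the remaining work is purely elliptic-combinatorial simplification.
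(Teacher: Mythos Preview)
Your proposal is correct and follows exactly the approach in the paper: specialize \eqref{cr87gl} with $q\mapsto q^2$ and $(a,b,c,d,e)\mapsto(1,q/ab,b/a,aq^{|\bn|},aq^{|\bn|+1})$, match it to \eqref{fab} with $F_{\bn\bk}$ as in the $m=2$ case of Corollary~\ref{bcmic}, and then read off Theorem~\ref{dqst} from the inverse relation \eqref{gba} after routine simplification. The paper itself says only that the inverse identity ``is easily simplified,'' so your anticipated bookkeeping with reflection and quasi-periodicity identities is precisely what is implicitly left to the reader.
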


Theorem~\ref{dqst} has two companion identities where the summation is on a simplex. In the first one, we assume that $b=q^{-N}$ and use the standard argument
to replace the parameters $q^{-2n_j}/x_j$ by generic parameters $b_j$.

\begin{Corollary}\label{dqsc1}
We have the quadratic summation formula
\begin{gather*}
\sum_{\substack{k_1,\dots,k_r\ge 0,\\ |\bk|\le N}}
\frac{\Delta\big(\bx q^{2\bk};p\big)}{\Delta(\bx;p)}
q^{|\bk|-e_2(\bk)}
\frac{\big(a^2q;q^2,p\big)_{|\bk|} \big(q^{N+1},q^{-N};q,p\big)_{|\bk|}}
{\prod\limits_{i=1}^{r}\big(ax_iq^{N+2},ax_iq^{1-N};q^2,p\big)_{k_i}}
\\ \phantom{\sum_{\substack{k_1,\dots,k_r\ge 0,\\ |\bk|\le N}}=}{}
\times\prod\limits_{1\le i<j\le r}\frac 1{\big(x_ix_j;q^2,p\big) _{k_i+k_j}}
\prod\limits_{i,j=1}^{r}\frac{\big(b_jx_i,x_i/b_j;q^2,p\big)_{k_i}}
{\big(q^2x_i/x_j;q^2,p\big)_{k_i}}
\\ \phantom{\sum_{\substack{k_1,\dots,k_r\ge 0,\\ |\bk|\le N}}=}{}
\times\prod\limits_{i=1}^{r}\bigg(\frac{\ta\big(ax_iq^{2k_i+|\bk|};p\big)}{\ta(ax_i;p)}
\frac {(ax_i;q,p)_{|\bk|}
(aq/x_i;q,p)_{|\bk|-k_i}}
{(aq/b_i,ab_iq;q,p)_{|\bk|}\big(q^{k_i-|\bk|}x_i/a;q,p\big)_{k_i}}\bigg)
\\ \phantom{\sum_{\substack{k_1,\dots,k_r\ge 0,\\ |\bk|\le N}}}{}
=\begin{cases}
\displaystyle\prod\limits_{i=1}^{r}\frac
{\big(ax_iq^2,aq^2/x_i,b_iq/a,q/ab_i;q^2,p\big)_m}
{\big(q/ax_i,x_iq/a,aq^2/b_i,ab_iq^2;q^2,p\big)_m},&N=2m,
\\[2.5ex]
\displaystyle\prod\limits_{i=1}^{r}\frac
{\big(ax_iq,aq/x_i,b_i/a,1/ab_i;q^2,p\big)_m}
{\big(1/ax_i,x_i/a,aq/b_i,ab_iq;q^2,p\big)_m},&N=2m-1.
\end{cases}
\end{gather*}
\end{Corollary}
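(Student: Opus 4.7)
\medskip\noindent\textbf{Proof plan.} The plan is to imitate, for Corollary~\ref{dqsc1} and Theorem~\ref{dqst}, the ``truncate then analytically continue'' argument that has just been sketched for deducing Corollary~\ref{ijsc} from Theorem~\ref{ijst}. The starting observation is that the specialization $b=q^{-N}$ of Theorem~\ref{dqst} is equivalent to the specialization $b_j=q^{-2n_j}/x_j$ ($j=1,\dots,r$) of Corollary~\ref{dqsc1}. Indeed, setting $b=q^{-N}$ turns $(b,q/b;q,p)_{|\bk|}$ into $(q^{-N},q^{N+1};q,p)_{|\bk|}$ and the factor $(q^{-N};q,p)_{|\bk|}$ truncates the sum to the simplex $|\bk|\le N$; at the same time, setting $b_j=q^{-2n_j}/x_j$ turns $\bigl(b_jx_i,x_i/b_j;q^2,p\bigr)_{k_i}$ into $\bigl(q^{-2n_j}x_i/x_j,x_ix_jq^{2n_j};q^2,p\bigr)_{k_i}$, which is exactly the double product on the left-hand side of Theorem~\ref{dqst}. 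Matching the remaining $i$-local factors (in particular $(aq/b_i,ab_iq;q,p)_{|\bk|}$ versus $\bigl(ax_iq^{2n_i+1},aq^{1-2n_i}/x_i;q,p\bigr)_{|\bk|}$) is immediate, and the two right-hand sides are compared using $(a;q,p)_{2n}=(a;q^2,p)_n(aq;q^2,p)_n$ together with $(a;q,p)_{-n}=1/(aq^{-n};q,p)_n$.

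Next, I would extend from this countable family of specializations to all generic $b_j$. Using the quasi-periodicity $\theta(pz;p)=-z^{-1}\theta(z;p)$ applied to every factor on either side of the proposed identity that depends on a given $b_j$, one checks that the two sides transform by the same scalar under $b_j\mapsto p\,b_j$. Combined with the previous step, this shows that Corollary~\ref{dqsc1} holds whenever $b_jx_j\in p^{\mathbb Z}q^{-2\mathbb Z_{\ge 0}}$ for every $j$. Since the summation is supported on the finite simplex $|\bk|\le N$, both sides are rational functions of $b_1,\dots,b_r$ times an overall theta-function prefactor, so this Zariski-dense set of identities extends by analytic continuation in the $b_j$ to the full statement.

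The main obstacle is the parity-dependent right-hand side. Evaluating the product in Theorem~\ref{dqst} at $b=q^{-N}$ and $b_j=q^{-2n_j}/x_j$, one has to rearrange the factors so that the dependence on $N$ collapses into a product indexed by $m=\lfloor N/2\rfloor$ (for $N$ even) or $\lceil N/2\rceil$ (for $N$ odd), while the $n_j$-dependence gets reabsorbed into factors of the form $(b_iq/a,q/ab_i,aq^2/b_i,ab_iq^2;q^2,p)_m$ on the right of Corollary~\ref{dqsc1}. This is where the $q^2$-step shifted factorials on the right-hand side of Corollary~\ref{dqsc1} force the split into two cases, and the manipulation requires careful use of $(a;q^2,p)_n$ versus $(aq;q^2,p)_n$ to handle even and odd indices consistently, together with the inversion relation $\theta(1/x;p)=-x^{-1}\theta(x;p)$ to account for the sign and monomial factors that appear when reflecting shifted factorials with negative exponents. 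The step is mechanical but bookkeeping-heavy, and it is the only nontrivial piece once the truncation-plus-continuation strategy is in place.
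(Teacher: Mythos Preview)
Your proposal is correct and follows exactly the approach indicated in the paper: specialize $b=q^{-N}$ in Theorem~\ref{dqst}, observe that this coincides with the case $b_j=q^{-2n_j}/x_j$ of Corollary~\ref{dqsc1}, and then invoke the ``standard argument'' (quasi-periodicity plus analytic continuation in the $b_j$) already sketched after Corollary~\ref{ijsc}. Your write-up simply spells out the details of that standard argument, including the parity bookkeeping on the right-hand side, which the paper leaves implicit.
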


In the second companion identity to Theorem~\ref{dqst}, we first let $a=q^{-N-\frac 12}$ and then replace~$x_i$ by $aq^{N+\frac 12}x_i$ for $1\leq i\leq r$, before applying the standard argument.

\begin{Corollary}\label{dqsc2}
We have the quadratic summation formula
\begin{gather*}
\sum_{\substack{k_1,\dots,k_r\ge 0,\\ |\bk|\le N}}
\frac{\Delta\big(\bx q^{2\bk};p\big)}{\Delta(\bx;p)}
q^{|\bk|-e_2(\bk)}
\prod\limits_{1\le i<j\le r}\frac 1{\big(a^2x_ix_jq^{2N+1};q^2,p\big) _{k_i+k_j}}
\\ \phantom{\sum_{\substack{k_1,\dots,k_r\ge 0,\\ |\bk|\le N}}=}{}
\times
\prod\limits_{i=1}^{r}\bigg(\frac{\ta\big(ax_iq^{2k_i+|\bk|};p\big)}{\ta(ax_i;p)}
\frac {(ax_i;q,p)_{|\bk|}
\big(q^{-2N}/ax_i;q,p\big)_{|\bk|-k_i}}
{\big(aq/c_i,c_iq^{-2N}/a;q,p\big)_{|\bk|}\big(q^{2N+1+k_i-|\bk|}ax_i;q,p\big)_{k_i}}\bigg)
\\ \phantom{\sum_{\substack{k_1,\dots,k_r\ge 0,\\ |\bk|\le N}}=}{}
\times\frac{(b,q/b;q,p)_{|\bk|}\big(q^{-2N};q^2,p\big)_{|\bk|}}
{\prod\limits_{i=1}^{r}\big(abx_iq,ax_iq^2/b;q^2,p\big)_{k_i}}
\prod\limits_{i,j=1}^{r}\frac{\big(c_jx_i,a^2q^{2N+1}x_i/c_j;q^2,p\big)_{k_i}}
{\big(q^2x_i/x_j;q^2,p\big)_{k_i}}
\\ \phantom{\sum_{\substack{k_1,\dots,k_r\ge 0,\\ |\bk|\le N}}}{}
=\prod\limits_{i=1} ^{r}\frac{(ax_iq;q,p)_{2N}\big(abq/c_i,aq^2/bc_i;q^2,p\big)_N}
{(a/c_i;q,p)_{2N}\big(ax_iq^2/b,abx_iq;q^2,p\big)_N}.
\end{gather*}
\end{Corollary}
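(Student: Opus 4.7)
The strategy is the specialization-plus-analytic-continuation template used to pass from Theorem~\ref{ijst} to Corollary~\ref{ijsc}, as sketched in the paragraph following that corollary. Concretely, I would start from Theorem~\ref{dqst} and substitute $a\mapsto q^{-N-1/2}$. The factor $(a^2q;q^2,p)_{|\bk|}$ in the summand then becomes $(q^{-2N};q^2,p)_{|\bk|}$, whose $(N{+}1)$-st factor is $\theta(1;p)=0$, so the summation over the hyper-rectangle $0\le k_i\le n_i$ truncates to a sum over the simplex $|\bk|\le N$, with the $n_i$ surviving only as symbols inside the summand. I would then reintroduce a free parameter $a$ by rescaling $x_i\mapsto aq^{N+1/2}x_i$, which leaves the ratios $x_i/x_j$ — and hence $\Delta(\bx q^{2\bk};p)/\Delta(\bx;p)$ together with factors like $(q^2x_i/x_j;q^2,p)_{k_i}$ — invariant, while turning each occurrence of the old product $a_{\mathrm{old}} x_i^{\mathrm{old}}$ into the new $ax_i$.

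Next I would verify that the resulting identity is precisely Corollary~\ref{dqsc2} specialized at
\[
c_j = q^{-2n_j}/x_j, \qquad 1\le j\le r.
\]
Under this identification $aq/c_i=ax_iq^{2n_i+1}$ and $c_iq^{-2N}/a=q^{-2N-2n_i}/(ax_i)$ reproduce the transformed denominator factors $(ax_iq^{2n_i+1},aq^{1-2n_i}/x_i;q,p)_{|\bk|}$, while $c_jx_i=q^{-2n_j}x_i/x_j$ and $a^2q^{2N+1}x_i/c_j=a^2q^{2N+1}x_ix_jq^{2n_j}$ reproduce the cross-factors $(q^{-2n_j}x_i/x_j,x_ix_jq^{2n_j};q^2,p)_{k_i}$. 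A parallel term-by-term check on the right-hand side, using $(u;q,p)_{m+n}=(u;q,p)_m(uq^m;q,p)_n$ to reindex Pochhammers between the Theorem's $2n_i$ and the Corollary's $2N$, shows that the closed-form product matches as well.

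Finally, I would extend to generic $c_j$ by the standard argument. For fixed values of the remaining parameters, both sides of Corollary~\ref{dqsc2}, after clearing denominators, are theta functions in each $c_j$ of equal and bounded degree in the sense of Definition~\ref{htfd}. The quasi-periodicity~\eqref{qpt} propagates the established equality from $c_jx_j\in q^{-2\mathbb Z_{\ge 0}}$ to $c_jx_j\in p^{\mathbb Z}q^{-2\mathbb Z_{\ge 0}}$; since this set contains more than enough inequivalent points modulo $p^{\mathbb Z}$, Lemma~\ref{tvl} forces agreement identically in the $c_j$. The main obstacle is the second step: because the Pochhammer indices on the two sides ($2n_i$ versus $2N$) and the base arguments differ, the check that the transformed right-hand side of Theorem~\ref{dqst} exactly matches the specialized right-hand side of Corollary~\ref{dqsc2} is laborious even though each individual substitution is elementary. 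Once this verification is carried out, the remaining quasi-periodicity/analytic-continuation step is entirely routine.
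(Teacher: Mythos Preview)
Your proposal is correct and follows exactly the route the paper indicates: specialize Theorem~\ref{dqst} at $a=q^{-N-\frac12}$, rescale $x_i\mapsto aq^{N+\frac12}x_i$, observe that this yields Corollary~\ref{dqsc2} at $c_j=q^{-2n_j}/x_j$, and then invoke the standard quasi-periodicity/analytic-continuation argument to free the~$c_j$. Your write-up in fact supplies more detail than the paper, which compresses all of this into a single sentence.
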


The cases $p=0$ of Theorem~\ref{dqst}, Corollaries~\ref{dqsc1} and \ref{dqsc2}
are due to the second author \mbox{\cite[Theorems~5.21, 5.25 and 5.27]{s}}.
In the case $r=1$, they are equivalent to Theorem~\ref{aqist} and hence again reduce to summations from \cite{w}.

A $D_r$ cubic summation formula is obtained if we replace
$q$ by $q^3$ in \eqref{cr87gl} and then make the substitutions
$(a,b,c,d,e)\mapsto \big(1,aq^{|\bn|},aq^{|\bn|+1},aq^{|\bn|+2},1/a^3\big)$.
The resulting identity takes the~form~\eqref{fab}, where $F_{\bn\bk}$ is as in the case $m=3$ of Corollary~\ref{bcmic} and
\begin{gather*}
a_{\bk}=
\prod\limits_{i=1}^r\frac{(ax_iq;q,p)_{3k_i}\big(x_i/a^3;q^3,p\big)_{k_i}}
{(x_i/a;q,p)_{3k_i}\big(a^3x_iq^3;q^3,p\big)_{k_i}},
\\[-1ex]
b_{\bk}=\frac{\prod\limits_{i, j=1}^r\big(x_ix_jq^3;q^3,p\big)_{k_i}}{\prod\limits_{1\le i<j\le r}\big(x_ix_jq^3;q^3,p\big)_{k_i+k_j}}
\frac{\big(q^{1-2|\bk|}/a^2,a^2q^{3-|\bk|},a^2q^{2-|\bk|};q^3,p\big)_{|\bk|}}{\prod\limits_{i=1} ^{r}
\big(x_iq^{1-|\bk|}/a,x_iq^{3-|\bk|}/a,a^3x_iq^3,aq^{1+|\bk|-3k_i}/x_i;q^3,p\big)_{k_i}}.
\end{gather*}
The inverse identity \eqref{gba} is easily simplified as follows.

\begin{Theorem}\label{cqst}
We have the cubic summation formula
\begin{gather*}
\sum_{\substack{0\leq k_i\leq n_i,\\i=1,\dots,r}}
\frac{\Delta\big(\bx q^{3\bk};p\big)}{\Delta(\bx;p)}q^{|\bk|-2e_2(\bk)}
\prod\limits_{1\le i<j\le r}\frac 1{(x_ix_j;q^3,p) _{k_i+k_j}}
\\[-1ex] \phantom{\sum_{\substack{0\leq k_i\leq n_i,\\i=1,\dots,r}}}
\times\prod\limits_{i=1}^{r}\bigg(\frac{\ta\big(ax_iq^{|\bk|+3k_i};p\big)}{\ta(ax_i;p)}
\frac {(ax_i;q,p)_{|\bk|}
(aq/x_i;q,p)_{|\bk|-k_i}}
{\big(ax_iq^{3n_i+1},aq^{1-3n_i}/x_i;q,p\big)_{|\bk|}\big(q^{k_i-|\bk|}x_i/a;q,p\big)_{2k_i}}\bigg)
\\[-1ex] \phantom{\sum_{\substack{0\leq k_i\leq n_i,\\i=1,\dots,r}}}
\times\frac{\big(1/a^2;q,p\big)_{|\bk|}\big(a^2q;q,p\big)_{2|\bk|}}
{\prod\limits_{i=1}^{r}\big(a^3x_iq^3;q^3,p\big)_{k_i}}\;
\prod\limits_{i,j=1} ^{r}\frac{\big(q^{-3n_j}x_i/x_j,x_ix_jq^{3n_j};q^3,p\big)_{k_i}}
{\big(q^3x_i/x_j;q^3,p\big)_{k_i}}
\\[-1ex] \phantom{\sum_{\substack{0\leq k_i\leq n_i,\\i=1,\dots,r}}}
=\prod\limits_{i=1} ^{r}\frac{(ax_iq;q,p)_{3n_i}\big(x_i/a^3;q^3,p\big)_{n_i}}
{(x_i/a;q,p)_{3n_i}\big(a^3x_iq^3;q^3,p\big)_{n_i}}.
\end{gather*}
\end{Theorem}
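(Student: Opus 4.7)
The plan is to derive Theorem~\ref{cqst} exactly as the paper derives Theorem~\ref{dqst}, namely by inverting the specialized $C_r$ elliptic Jackson summation \eqref{cr87gl} via the $m=3$ case of Corollary~\ref{bcmic}. First, I would take \eqref{cr87gl} and make the preliminary substitution $q\mapsto q^3$, followed by $(a,b,c,d,e)\mapsto\big(1,aq^{|\bn|},aq^{|\bn|+1},aq^{|\bn|+2},1/a^3\big)$. A direct check shows that the balancing condition becomes $q^{3(|\bn|+1)}=aq^{|\bn|}\cdot aq^{|\bn|+1}\cdot aq^{|\bn|+2}\cdot a^{-3}=q^{3|\bn|+3}$, so the hypothesis of \eqref{cr87gl} is satisfied.

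Next, I would reorganize the resulting identity into the form $\sum_{\bf 0\le\bk\le\bn}F_{\bn\bk}a_\bk=b_\bn$, where $F_{\bn\bk}$ is the matrix from Corollary~\ref{bcmic} at $m=3$. This amounts to extracting from the left-hand side of the specialized \eqref{cr87gl} all factors matching $F_{\bn\bk}$ (the ratio $\Delta(\bx q^{3\bk};p)/\Delta(\bx;p)$, the $\theta(x_ix_jq^{3(k_i+k_j)};p)$ products, the $q^{3|\bk|}$ weight factor, the $(q^{-3n_j}x_i/x_j,x_ix_j;q^3,p)_{k_i}$ pieces and the $(ax_iq^{|\bn|},x_i/a;q,p)_{3k_i}$ pieces) and collecting the rest into $a_\bk$; the right-hand side of \eqref{cr87gl} after substitution gives $b_\bn$. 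The expressions $a_\bk$ and $b_\bk$ displayed in the paper immediately before Theorem~\ref{cqst} can be read off in this way; the main manipulations are the identities $(a;q,p)_{3k}=(a,aq,aq^2;q^3,p)_k$ and its analogue for the ratios $q^{|\bn|-n_i+1}/bcdx_i$ appearing on the RHS of \eqref{cr87gl}. Once these $a_\bk$ and $b_\bk$ are in hand, Corollary~\ref{bcmic} at $m=3$ yields the inverse identity $\sum_{\bf 0\le\bk\le\bn}G_{\bn\bk}b_\bk=a_\bn$.

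Finally, I would simplify the inverse identity into the shape stated in Theorem~\ref{cqst}. Concretely, I would multiply out $G_{\bn\bk}$ (from Corollary~\ref{bcmic}) with $b_\bk$ (from step two, with $\bn$ replaced by $\bk$), noting that the product $\prod_{i,j}(q^{-3k_j}x_i/x_j,x_ix_jq^{3k_j};q^3,p)_{l_i}$ combined with the numerator factors $(x_ix_jq^3;q^3,p)_{k_i}/(x_ix_jq^3;q^3,p)_{k_i+k_j}$ from $b_\bk$ produces the factor $\prod_{i,j}(q^{-3n_j}x_i/x_j,x_ix_jq^{3n_j};q^3,p)_{k_i}\prod_{i<j}(x_ix_j;q^3,p)_{k_i+k_j}^{-1}$ that appears in the theorem; the theta function piece from $G_{\bn\bk}$ combines with $\theta(ax_iq^{|\bl|\pm ml_i};p)$-type factors to give $\theta(ax_iq^{|\bk|+3k_i};p)/\theta(ax_i;p)$; and the $q^{-|\bl|}$-type quasi-periodic weight, together with the $q^{|\bk|}$ weight already in $b_\bk$ and the Vandermonde rearrangement, produces the combinatorial factor $q^{|\bk|-2e_2(\bk)}$. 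Matching remaining $\theta$-quotients against $a_\bn$ gives the right-hand side of Theorem~\ref{cqst}.

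The main obstacle is not conceptual but notational: the pairing of ordinary with $q^3$-shifted factorials and the extraction of the exponent $|\bk|-2e_2(\bk)$ requires careful bookkeeping, since both the Vandermonde-type factors in $G_{\bn\bk}$ and the $q^{-3k_j}$ entries contribute quadratic terms in $|\bk|$ to the exponent of $q$. As in the corresponding trigonometric case treated in \cite{s}, and in the quadratic analogue (Theorem~\ref{dqst}) already handled in the paper, I expect the simplifications to be essentially mechanical once the identification of $a_\bk$, $b_\bk$ and $F_{\bn\bk}$ is fixed.
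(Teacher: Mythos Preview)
Your approach is exactly the one taken in the paper: replace $q$ by $q^3$ in \eqref{cr87gl}, specialize $(a,b,c,d,e)\mapsto\big(1,aq^{|\bn|},aq^{|\bn|+1},aq^{|\bn|+2},1/a^3\big)$, recognize the result as \eqref{fab} with $F_{\bn\bk}$ from the $m=3$ case of Corollary~\ref{bcmic}, and then read off Theorem~\ref{cqst} from the inverse relation \eqref{gba}. The paper likewise notes that the remaining simplification is routine, so your assessment of the only obstacle as bookkeeping is accurate.
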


We will give two companion identities to Theorem~\ref{cqst}. For the first one, we first let $a=q^{N/2}$ and then make the substitutions
$x_i\mapsto a x_iq^{-N/2}$ for $1\leq i\leq r$. The standard argument gives the~following identity.

\begin{Corollary}\label{cssa}
We have the cubic summation formula
\begin{gather*}
\sum_{\substack{k_1,\dots,k_r\geq 0\\|\bk |\leq N}}
\frac{\Delta\big(\bx q^{3\bk};p\big)}{\Delta(\bx;p)}q^{|\bk|-2e_2(\bk)}
\prod\limits_{1\le i<j\le r}\frac 1{\big(a^2x_ix_jq^{-N};q^3,p\big) _{k_i+k_j}}
\\[-1ex] \phantom{\sum_{\substack{k_1,\dots,k_r\geq 0\\|\bk |\leq N}}=}{}
\times
\prod\limits_{i=1}^{r}\bigg(\frac{\ta\big(ax_iq^{|\bk|+3k_i};p\big)}{\ta(ax_i;p)}
\frac {(ax_i;q,p)_{|\bk|}
\big(q^{N+1}/ax_i;q,p\big)_{|\bk|-k_i}}
{\big(aq/b_i,b_iq^{N+1}/a;q,p\big)_{|\bk|}\big(q^{k_i-|\bk|-N}ax_i;q,p\big)_{2k_i}}\bigg)
\\[-1ex] \phantom{\sum_{\substack{k_1,\dots,k_r\geq 0\\|\bk |\leq N}}=}{}
\times
\frac{\big(q^{-N};q,p\big)_{|\bk|} \big(q^{N+1};q,p\big)_{2|\bk|}}
{\prod\limits_{i=1}^{r}\big(ax_iq^{N+3};q^3,p\big)_{k_i}}
\prod\limits_{i,j=1}^{r}\frac{\big(b_jx_i,a^2x_iq^{-N}/b_j;q^3,p\big)_{k_i}}
{\big(q^3x_i/x_j;q^3,p\big)_{k_i}}
\\[-1ex] \phantom{\sum_{\substack{k_1,\dots,k_r\geq 0\\|\bk |\leq N}}}{}
=\prod\limits_{i=1}^{r}\frac{\big(b_i/a;q,p\big)_{N+1} \big(q^{-N}/ax_i;q^3,p\big)_{N+1}}
{(1/ax_i;q,p)_{N+1} \big(b_iq^{-N}/a;q^3,p\big)_{N+1}}.
\end{gather*}
\end{Corollary}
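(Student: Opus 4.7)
The plan is to adapt the ``standard argument'' from the deduction of Corollary~\ref{ijsc} from Theorem~\ref{ijst}: identify a discrete specialization of Corollary~\ref{cssa} with a consequence of Theorem~\ref{cqst}, then extend by quasi-periodicity and elliptic analytic continuation in the parameters $b_j$.

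The first step is to carry out the substitutions flagged just before the statement. Starting from Theorem~\ref{cqst}, I would set its $a$ equal to $q^{N/2}$ (introducing the new integer $N$) and then substitute $x_i\mapsto ax_iq^{-N/2}$, treating $a$ as a fresh free parameter unrelated to the specialized one. For instance, the Theorem's $(1/a^2;q,p)_{|\bk|}$ then becomes $(q^{-N};q,p)_{|\bk|}$, which automatically truncates the sum to $|\bk|\leq N$; the factor $aq/x_i$ turns into $q^{N+1}/(ax_i)$; the factor $a^3x_iq^3$ becomes $ax_iq^{N+3}$; and so on. A direct factor-by-factor comparison then shows that the transformed identity coincides with Corollary~\ref{cssa} at the discrete specialization $b_j=q^{-3n_j}/x_j$, the Corollary's factors $(b_jx_i;q^3,p)_{k_i}$, $(a^2x_iq^{-N}/b_j;q^3,p)_{k_i}$, $(aq/b_i;q,p)_{|\bk|}$, $(b_iq^{N+1}/a;q,p)_{|\bk|}$ each reproducing their Theorem counterparts.

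The two right-hand sides look structurally different (products of length $3n_i$ and $n_i$ in the Theorem versus length $N+1$ in the Corollary), so matching them reduces to the pure theta-function identity
\[\prod_i\frac{(yq;q,p)_{3n_i}(yq^{-2N};q^3,p)_{n_i}}{(yq^{-N};q,p)_{3n_i}(yq^{N+3};q^3,p)_{n_i}}=\prod_i\frac{(q^{-3n_i}/y;q,p)_{N+1}(q^{-N}/y;q^3,p)_{N+1}}{(1/y;q,p)_{N+1}(q^{-3n_i-N}/y;q^3,p)_{N+1}}\]
with $y=ax_i$, which I would verify using the inversion formula~\eqref{ti} to convert the negative-index factorials on the right into products of the positive-index factorials on the left, tracking the scalar prefactors of $y$ and $q$ that arise. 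Having established Corollary~\ref{cssa} at $b_j=q^{-3n_j}/x_j$ for all non-negative integers $n_j$, I would invoke the quasi-periodicity~\eqref{qpt}: both sides transform in the same way under $b_j\mapsto pb_j$, so the identity propagates to all $b_j$ with $b_jx_j\in p^{\mathbb Z}q^{-3\mathbb Z_{\geq 0}}$, and a standard analytic continuation argument (cf.\ Lemma~\ref{tvl}, applied after clearing denominators) upgrades this to identity for generic $b_j$.

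The main obstacle is the verification of the displayed theta-function identity. The summand comparison and the quasi-periodicity extension are mechanical, but reconciling the length-$(N+1)$ products on the right with the length-$3n_i$ and length-$n_i$ products on the left requires a careful unwinding of negative-index elliptic shifted factorials via $\theta(1/x;p)=-\theta(x;p)/x$. Each manipulation is elementary, but assembling the cancellations and scalar factors cleanly is what takes the work.
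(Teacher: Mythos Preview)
Your proposal is correct and follows exactly the paper's approach: specialize $a=q^{N/2}$ and $x_i\mapsto ax_iq^{-N/2}$ in Theorem~\ref{cqst}, identify the result with the case $b_j=q^{-3n_j}/x_j$ of Corollary~\ref{cssa}, then extend to generic $b_j$ via quasi-periodicity and analytic continuation. The explicit theta-function identity you isolate for matching the right-hand sides is part of what the paper subsumes under ``the standard argument'' without writing it out.
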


Next, we let $a=q^{-(N+1)/2}$ in Theorem~\ref{cqst}, so that the factor $\big(a^2q;q,p\big)_{2|\bk|}$ vanishes unless $2|\bk|\leq N$.
As before, this gives the following companion identity.

\begin{Corollary}\label{cssb}
We have the cubic summation formula
\begin{gather*}
\sum_{\substack{k_1,\dots,k_r\geq 0\\ 2|\bk |\leq N}}
\frac{\Delta\big(\bx q^{3\bk};p\big)}{\Delta(\bx;p)}q^{|\bk|-2e_2(\bk)}
\prod\limits_{1\le i<j\le r}\frac 1{\big(a^2x_ix_jq^{N+1};q^3,p\big) _{k_i+k_j}}
\\ \phantom{\sum_{\substack{k_1,\dots,k_r\geq 0\\ 2|\bk |\leq N}}=}{}
\times
\prod\limits_{i=1}^{r}\bigg(\frac{\ta\big(ax_iq^{|\bk|+3k_i};p\big)}{\ta(ax_i;p)}
\frac {(ax_i;q,p)_{|\bk|} \big(q^{-N}/ax_i;q,p\big)_{|\bk|-k_i}}
{\big(aq/b_i,b_iq^{-N}/a;q,p\big)_{|\bk|}\big(q^{k_i-|\bk|+N+1}ax_i;q,p\big)_{2k_i}}\bigg)
\\ \phantom{\sum_{\substack{k_1,\dots,k_r\geq 0\\ 2|\bk |\leq N}}=}{}
\times
\frac{\big(q^{N+1};q,p\big)_{|\bk|} \big(q^{-N};q,p\big)_{2|\bk|}}
{\prod\limits_{i=1}^{r}\big(ax_iq^{2-N};q^3,p\big)_{k_i}}
\prod\limits_{i,j=1} ^{r}\frac{\big(b_jx_i,a^2x_iq^{N+1}/b_j;q^3,p\big)_{k_i}}
{\big(q^3x_i/x_j;q^3,p\big)_{k_i}}
\\ \phantom{\sum_{\substack{k_1,\dots,k_r\geq 0\\ 2|\bk |\leq N}}}{}
=\prod\limits_{i=1}^{r}\frac
{(ax_iq;q,p)_{N} \big(aq^{2-N}/b_i;q^3,p\big)_{N}}
{(aq/b_i;q,p)_{N} \big(ax_iq^{2-N};q^3,p\big)_{N}}.
\end{gather*}
\end{Corollary}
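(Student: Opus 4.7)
The plan is to deduce Corollary~\ref{cssb} from Theorem~\ref{cqst} by the same ``standard argument'' already sketched after Corollary~\ref{ijsc} and reused in the proofs of Corollaries~\ref{dqsc1}, \ref{dqsc2} and~\ref{cssa}. The crucial specialization is $a=q^{-(N+1)/2}$, for which $a^2q=q^{-N}$, so that the numerator factor $\big(a^2q;q,p\big)_{2|\bk|}=\big(q^{-N};q,p\big)_{2|\bk|}$ appearing on the left-hand side of Theorem~\ref{cqst} vanishes whenever $2|\bk|>N$. Consequently, provided each $n_i$ is chosen sufficiently large (for instance $n_i\geq\lfloor N/2\rfloor$), the hyper-rectangle constraint $0\leq k_i\leq n_i$ is automatically implied by $2|\bk|\leq N$, and the summation in Theorem~\ref{cqst} collapses to a summation over the simplex $\{(k_1,\dots,k_r)\colon k_i\geq 0,\,2|\bk|\leq N\}$.

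Next I would identify the parameters of Theorem~\ref{cqst} with those of Corollary~\ref{cssb} via $b_j=q^{-3n_j}/x_j$, so that $x_jq^{3n_j}=1/b_j$. A direct check using $a=q^{-(N+1)/2}$ confirms the matching of every factor on the left-hand side: the doubly-indexed products $\big(q^{-3n_j}x_i/x_j,x_ix_jq^{3n_j};q^3,p\big)_{k_i}$ become $\big(b_jx_i,a^2x_iq^{N+1}/b_j;q^3,p\big)_{k_i}$, and the $|\bk|$-length factor $\big(ax_iq^{3n_i+1},aq^{1-3n_i}/x_i;q,p\big)_{|\bk|}$ becomes $\big(aq/b_i,b_iq^{-N}/a;q,p\big)_{|\bk|}$. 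This would establish Corollary~\ref{cssb} whenever each $b_j$ lies in the discrete set $q^{-3\mathbb Z_{\geq 0}}/x_j$.

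To remove this restriction, I would appeal to the quasi-periodicity~\eqref{qpt} to extend the range of validity to $b_j\in p^{\mathbb Z}q^{-3\mathbb Z_{\geq 0}}/x_j$, and then invoke analytic continuation: both sides of the proposed identity are meromorphic functions of each $b_j$, so equality on a set with an accumulation point in every variable promotes to equality as meromorphic functions. The main obstacle is purely computational: one must meticulously match every elliptic shifted factorial after the specialization, in particular re-expressing the base-$q$ factorials $(ax_iq;q,p)_{3n_i}$ and $(x_i/a;q,p)_{3n_i}$ on the right-hand side of Theorem~\ref{cqst} in terms of base-$q^3$ factorials via the triple identity $(y;q,p)_{3n}=\big(y,yq,yq^2;q^3,p\big)_n$ together with the inversion formula~\eqref{ti}, so that they combine with the remaining $(q^3;q^3,p)$-factorials to yield the right-hand side of Corollary~\ref{cssb}.
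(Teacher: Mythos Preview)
Your approach is exactly the one the paper takes: specialize $a=q^{-(N+1)/2}$ in Theorem~\ref{cqst} so that $\big(a^2q;q,p\big)_{2|\bk|}=\big(q^{-N};q,p\big)_{2|\bk|}$ truncates the sum to the simplex $2|\bk|\le N$, then run the standard argument (discrete specialization $b_j=q^{-3n_j}/x_j$, quasi-periodicity, analytic continuation in~$b_j$).

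There is one small omission. After fixing $a=q^{-(N+1)/2}$ you have eliminated $a$ as a free parameter, yet Corollary~\ref{cssb} is stated for arbitrary~$a$. Your checks of the individual factors are all done with $a=q^{-(N+1)/2}$, so strictly speaking you have only established the corollary at that single value. The missing step---which the paper handles by the ``as before'' clause, pointing back to the substitution $x_i\mapsto ax_iq^{-N/2}$ made in Corollary~\ref{cssa}---is to rescale $x_i\mapsto ax_iq^{(N+1)/2}$ so as to reintroduce a generic parameter~$a$. Equivalently, one may simply observe that every factor in Corollary~\ref{cssb} depends on $a$, $x_i$, $b_i$ only through the combinations $ax_i$ and $a/b_i$, so validity for one value of~$a$ implies validity for all. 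Either remark closes the gap; once you add it, your argument is complete and coincides with the paper's.
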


The cases $p=0$ of Theorem~\ref{cqst}, Corollaries~\ref{cssa} and~\ref{cssb}
are due to the second author \mbox{\cite[Theorems~5.29 and~5.34]{s}}.
The case $r=1$ of Theorem~\ref{cqst} is equivalent to the case $b=a$ of~\cite[Corollary~4.5]{w} and the corollaries to \cite[Corollaries~4.14 and~4.12]{w}, respectively.
The case when
both $p=0$ and $r=1$ of all these results
are due to Gasper \cite{g}.

Finally, to obtain a
 quartic summation from \eqref{cr87gl}, we first replace $q$ by $q^4$ and
 then make the substitutions
 $(a,b,c,d,e)\mapsto \big(1,a q^{|\bn|},a q^{|\bn|+1},aq^{|\bn|+2},aq^{|\bn|+3}\big)$.
This is only possible if $a^4=q^{-2}$. The case $a=q^{-1/2}$ leads to a
trivial identity, so we will only consider the case $a=\ti q^{-1/2}$. We~find that \eqref{fab} holds, where $F_{\bn\bk}$ is as in Corollary~\ref{bcmic} with $m=4$ and $a=\ti q^{-1/2}$, and
\begin{gather*}
a_{\bk}=\prod\limits_{i=1}^r
\frac{\big(\ti x_i q^{\frac 12};q,p\big)_{4k_i}}{\big({-}\ti x_iq^{\frac 12};q,p\big)_{4k_i}},
\\
b_{\bk}=\frac{\prod\limits_{i,j=1}^r\big(x_ix_jq^4;q^4,p\big)_{k_i}}{\prod\limits_{1\leq i<j\leq r}\big(x_ix_jq^4;q^4,p\big)_{k_i+k_j}}
\\ \phantom{a_{\bk}=}{}
\times\frac{\big({-}q^{2-2|\bk|},-q^{3-2|\bk|},-q^{4-2|\bk|};q^4,p\big)_{|\bk|}}{\prod\limits_{i=1}^r\big({-}\ti x_iq^{\frac 52-|\bk|},-\ti x_iq^{\frac 72-|\bk|},-\ti x_iq^{\frac 92-|\bk|},\ti q^{|\bk|-4k_i+\frac 52}/x_i;q^4,p\big)_{k_i}}
\\ \phantom{a_{\bk}}{}
=q^{-3|\bk|-3e_2(\bk)}\big({-}1,-q,-q^2;q^2,p\big)_{|\bk|}
\frac{\prod\limits_{i,j=1}^r\big(x_ix_jq^4;q^4,p\big)_{k_i}}{\prod\limits_{1\leq i<j\leq r}\big(x_ix_jq^4;q^4,p\big)_{k_i+k_j}}
\\ \phantom{a_{\bk}=}{}
\times
\prod\limits_{i=1}^r\bigg(\frac{\theta\big(\ti q^{-\frac 12}/x_i;p\big)}{\theta\big(\ti q^{|\bk|-4k_i+\frac 12}/x_i;p\big)}\frac{\big(\ti q^{\frac 12}/x_i;q,p\big)_{|\bk|-k_i}}{\big(\ti q^{-\frac 12}/x_i;q,p\big)_{|\bk|}\big({-}\ti x_i q^{k_i-|\bk|+\frac 12};q,p\big)_{3k_i}}\bigg),
\end{gather*}
where the second expression for $b_{\bk}$ is obtained by some routine manipulations. In the corresponding identity \eqref{gba}, we reintroduce the parameter $a$ by substituting $x_i\mapsto -\ti a x_iq^{\frac 12}$, $1\leq i\leq r$. This leads to the following result.

\begin{Theorem}\label{cqust}
We have the quartic summation formula
\begin{gather*}
\sum_{\substack{0\leq k_i\leq n_i,\\ i=1,\dots,r}}
\frac{\Delta\big(\bx q^{4\bk};p\big)}{\Delta(\bx;p)}
q^{|\bk|-3e_2(\bk)}
\frac {\big({-}1,-q,-q^2;q^2,p\big)_{|\bk|}}{\prod\limits_{1\le i<j\le r}\big({-}a^2x_ix_jq;q^4,p\big) _{k_i+k_j}}
\\ \phantom{\sum_{\substack{0\leq k_i\leq n_i,\\ i=1,\dots,r}}}{}
\times
\prod\limits_{i=1}^{r}\bigg(\frac{\ta\big(ax_iq^{|\bk|+4k_i};p\big)}{\ta(ax_i;p)}
\frac {(ax_i;q,p)_{|\bk|} (-1/ax_i;q,p)_{|\bk|-k_i}}
{\big(ax_iq^{4n_i+1},-q^{-4n_i}/ax_i;q,p\big)_{|\bk|}\big({-}q^{k_i-|\bk|+1}ax_i;q,p\big)_{3k_i}}\bigg)
\\ \phantom{\sum_{\substack{0\leq k_i\leq n_i,\\ i=1,\dots,r}}}{}
\times
\prod\limits_{i,j=1} ^{r}\frac{\big(q^{-4n_j}x_i/x_j,-a^2x_ix_jq^{4n_j+1};q^4,p\big)_{k_i}}
{\big(q^4x_i/x_j;q^4,p\big)_{k_i}}
=\prod\limits_{i=1} ^{r}\frac{(ax_iq;q,p)_{4n_i}}
{(-ax_iq;q,p)_{4n_i}}.
\end{gather*}
\end{Theorem}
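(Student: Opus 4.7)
The plan is to mimic the strategy used in the derivations of Theorems~\ref{dqst} and~\ref{cqst}: start from the $C_r$ elliptic Jackson summation \eqref{cr87gl}, specialize its parameters so that the sum on the left is of the form \eqref{fab} with $F_{\bn\bk}$ given by a case of Corollary~\ref{bcmic}, and then read off \eqref{gba} as the desired new summation. Concretely, I would first replace $q$ by $q^4$ in \eqref{cr87gl} and then perform the substitution $(a,b,c,d,e)\mapsto(1,aq^{|\bn|},aq^{|\bn|+1},aq^{|\bn|+2},aq^{|\bn|+3})$. The balancing condition $a^2q^{|\bn|+1}=bcde$ of \eqref{cr87gl} (now with $q\to q^4$) reduces to $a^4=q^{-2}$. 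Since $a=q^{-1/2}$ collapses the sum to a trivial identity, I would take the non-trivial root $a=\ti q^{-1/2}$.

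Next, I would identify the resulting identity with a sum of the form \eqref{fab}. A direct inspection shows that, after isolating the $\bn$-dependent factors into $b_{\bn}$, the remaining kernel matches $F_{\bn\bk}$ from Corollary~\ref{bcmic} specialized at $m=4$ and $a=\ti q^{-1/2}$. The summand $a_{\bk}$ then takes the form recorded in the excerpt just before the theorem. The quantity $b_{\bk}$ arising on the right of the specialized Jackson sum can be brought into the compact shape displayed (involving $(-1,-q,-q^2;q^2,p)_{|\bk|}$ and a theta-quotient factor) by routine manipulations: repeated application of the splittings $(a^2;q^2,p)_k=(a,-a;q,p)_k$ and $(a;q^2,p)_k(aq;q^2,p)_k=(a;q,p)_{2k}$, together with the quasi-periodicity \eqref{qpt} and the inversion formula \eqref{ti}, allows one to absorb all powers of $\ti$ and to consolidate the $q^4$-shifted factorials into the $q^2$-shifted ones visible in the second expression for $b_{\bk}$.

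Once \eqref{fab} has been established in this form, the inverse summation \eqref{gba} is immediate: one has $a_{\bn}=\sum_{\mathbf 0\le\bk\le\bn}G_{\bn\bk}b_{\bk}$, where $G$ is the inverse of $F$ supplied by Corollary~\ref{bcmic}. Finally, I would reintroduce the free parameter $a$ by the substitution $x_i\mapsto -\ti a x_i q^{1/2}$ for $1\leq i\leq r$; the factors $\ti$ and the signs $-1$ coming from this change of variables combine with those already present to yield precisely the mixed $\pm$-pattern and the factor $(-ax_iq;q,p)_{4n_i}$ in the denominator on the right-hand side of Theorem~\ref{cqust}.

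The main obstacle will be the bookkeeping of elliptic shifted factorials, theta quotients and signs. The $m=4$ case of Corollary~\ref{bcmic} already carries several $q^4$-shifted Pochhammer products, and combining these with the factors produced by the specialized Jackson sum requires tracking the phases $\ti^{|\bk|}$ and the sign factors $(-1)^{e_2(\bk)}$ generated by the final change of variables, as well as verifying that the constraint $a^4q^{2}=1$ used initially is consistently removed after reintroducing $a$. No new conceptual ingredient beyond the matrix inversion of Corollary~\ref{bcmic} and the summation \eqref{cr87gl} is needed; the proof is essentially a careful, but mechanical, simplification.
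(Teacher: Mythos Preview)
Your proposal is correct and follows essentially the same route as the paper: replace $q$ by $q^4$ in \eqref{cr87gl}, specialize $(a,b,c,d,e)\mapsto(1,aq^{|\bn|},aq^{|\bn|+1},aq^{|\bn|+2},aq^{|\bn|+3})$ with the non-trivial root $a=\ti q^{-1/2}$ of $a^4=q^{-2}$, match with the $m=4$ case of Corollary~\ref{bcmic}, invert, and then restore the free parameter via $x_i\mapsto -\ti a x_i q^{1/2}$. The only difference is cosmetic presentation; the substance is identical.
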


Theorem~\ref{cqust} is a new result even when $p=0$. In fact, we are not aware of any other quartic summations for multiple hypergeometric series. The case $r=1$ (where we have put $x_1=1$ without loss of generality) is
\begin{gather}\label{wqs}
\sum_{k=0}^n\frac{\theta\big(aq^{5k};p\big)}{\theta(a;p)}
\frac{(a;q,p)_k(-1;q,p)_{2k}\big(q^{-4n},-a^2q^{4n+1};q^4,p\big)_k}
{\big(aq^{4n+1},-q^{-4n}/a;q,p\big)_k(-aq;q,p)_{3k}\big(q^2;q^2,p\big)_k} q^k
=\frac{(aq;q,p)_{4n}}{(-aq;q,p)_{4n}}.
\end{gather}
This is the case $b=-aq$ of Warnaar's ``less appealing quartic transformation'', that is, the penultimate identity in \cite{w}. More explicitly, it is given as \cite[Corollary~2.5]{cj}. The case $p=0$ of~\eqref{wqs} is equivalent to the special case $ab=-q^{1-2n}$ of~\cite[equation~(5.28)]{g}.

\subsection*{Acknowledgements}
We thank the anonymous referees for several useful comments.
The first author was partially supported by the Swedish Science Research Council.
The second author was partially supported by Austrian
Science Fund grants S9607 and P32305.


\pdfbookmark[1]{References}{ref}
\LastPageEnding

\end{document}